\newcommand{\RR}{\mathbb{R}}
\newcommand{\CC}{\mathbb{C}}
\newcommand{\D}{\mathbb{D}}
\newcommand{\cpn}{\mathbb{C}P^n_p}
\newcommand{\snp}{\mathbb{S}^{2n+1}_{2p}}
\newcommand{\ep}{\varepsilon}
\newcommand{\bD}{\bar{D}}
\newcommand{\tD}{\tilde{D}}
\newcommand{\mth}{\mathbf{\tilde{H}}(t)}
\newcommand{\mh}{\mathbf{H}(t)}
\newcommand{\mtqm}{\mathbf{\tilde{M}}_q^m(t)}
\newcommand{\mqm}{\mathbf{M}_q^m(t)}
\newcommand{\q}{\mathfrak{q}_1}
\newcommand{\qq}{\mathfrak{q}_2}
 \newtheorem{theorem}{Theorem}[section]
 \newtheorem{cor}[theorem]{Corollary}
 \newtheorem{lemma}[theorem]{Lemma}
\newtheorem*{conj*}{Conjecture}
\theoremstyle{definition}
 \newtheorem{definition}[theorem]{Definition}
 \newtheorem{example}[theorem]{Example}
\begin{document}

\title{Hopf Real Hypersurfaces in the Indefinite Complex Projective Space}

\author[M. Kimura]{Makoto Kimura${}^\ddagger$}
\address[M. Kimura]{Department of Mathematics, Faculty of Science, Ibaraki University, Mito, Ibaraki 310-8512, JAPAN}
\curraddr{}
\email{makoto.kimura.geometry@vc.ibaraki.ac.jp}
\thanks{${}^\dagger$ Supported by JSPS KAKENHI Grant Number JP16K05119.}

\author[M.~Ortega]{Miguel Ortega${}^\star$}

\address[M.~Ortega]{Instituto de Matemáticas IEMathUGR,
Departamento de Geometr\'ia y Topolog\'ia, Facultad de Ciencias, Universidad de Granada, 18071 Granada (Spain)}
\curraddr{}
\email{miortega@ugr.es}
\thanks{${}^\star$ Partially supported by the Spanish Ministery of Economy and Competitiveness, and European Region Development Fund, project MTM2016-78807-C2-1-P, and by the Junta de Andaluc\'{\i}a grant FQM-324.}

\date{}

% ----------------------------------------------------------------------

\begin{abstract}
We wish to attack the problems that H.~Anciaux and K.~Panagiotidou posed in  \cite{AP}, for non-degenerate real hypersurfaces in indefinite complex projective space. We will slightly change these authors' point of view, obtaining cleaner equations for the almost contact metric structure. To make the theory meaningful, we construct new families of non-degenerate Hopf real hypersurfaces whose shape operator is diagonalisable, and one Hopf example with degenerate metric and non-diagonalisable shape operator.  Next, we obtain a rigidity result. We classify those real hypersurfaces which are $\eta$-umbilical. As a consequence, we characterize some of our new examples as those whose Reeb vector field $\xi$ is Killing. 
\end{abstract}

\subjclass{Primary 
53B25, 
% Local submanifolds
53C50; 
%Lorentz manifolds, manifolds with indefinite metrics
Secundary 53C42, 
% Immersions (minimal, prescribed curvature, tight, etc.) 
53B30. 
%Lorentz metrics, indefinite metrics}
}

\keywords{Real hypersurface, indefinite complex projective space, Hopf real hypersurface.}

%%% ----------------------------------------------------------------------
\maketitle
%%% ----------------------------------------------------------------------
%\tableofcontents

\section{Introduction}

The study of real hypersurfaces in indefinite complex projective space seems to be initiated by A.~Bejancu and K.~L.~Dugal in \cite{BD}. However, their point of view was not followed by the authors studying real hypersurfaces in Riemannian non-flat complex space forms. We had to wait for H.~Anciaux and K.~Panagiotidou to obtain new results in \cite{AP} concerning non-lightlike hypersurfaces. In these  papers, the authors link two classical branches of Differential Geometry, namely, real hypersurfaces in complex space forms and semi-Riemannian Geometry.  Needless to say, both branches are very well-known and developed. The study of real hypersurfaces can be regarded as a final product, in the sense that it has few  applications, or rather, it has been a good place to use techniques from other theories. On the contrary, semi-Riemannian Geometry is not only interesting by itself, but a very powerful and fruitful tool to solve many problems in Mathematics and Physics. 

A.~Bejancu and K.~L.~Dugal  paid attention to real hypersurfaces  in (flat) complex space forms, by considering the $(\varepsilon)$-Sasakian and $(\varepsilon)$-cosymplectic structures. On the other hand, H.~Anciaux and K.~Panagiotidou  constructed a beautiful theory for non-degenerate real hypersurfaces in both complex and para-complex  indefinite space forms, mainly in the non-flat cases. In their paper \cite{AP}, one can see the influence of all the theory of real hypersurfaces in non-flat complex space forms.  As a result, they showed that some of the classical and most celebrated results in this theory also hold in their setting. We will follow this second line. 

It is worthwhile to say a few words on real hypersurfaces in non-flat complex space forms. Probably, the first important result is due to R.~Takagi, \cite{Takagi}, where  he classified the homogeneous examples in the complex projective space, obtaining the so-called \textit{Takagi's list}. Since then, hundreds of works about real hypersurfaces have appeared, not only when the ambient space is the complex projective space, but also for the complex hyperbolic space, the quaternionic space forms, the Grassmanian of 2-complex planes, and the complex quadric. It is impossible to give a short and fair list of contributions, so we suggest the reader to check the nice survey by T.~E.~Cecil and P.~J.~Ryan,  \cite{CR2}. 

In this paper, we wish to further develop the ideas that Anciaux and Panagiotidou started, which deal with  non-degenerate real hypersurfaces in non-flat indefinite complex and para-complex space forms. We also would like to study the open problems they posed. However, we will just focus on the indefinite complex projective space $\cpn$ of index $1\leq p\leq n-1$, with some subtle, but important differences. In \cite{AP},  when a real hypersurface had a timelike unit normal vector field,  the authors always changed the metric $g$ by $-g$. Our first decision consist of allowing the normal vector to have its own causal character, without changing the metric, because it is a much  more natural way to work. In Section \ref{preliminaries}, on any non-degenerate real hypersurface $M$, we set the almost-contact structure $(g,\xi,\eta)$ where, as usual, $g$ is the metric, $\xi$ is the Reeb vector field, and $\eta$ is the metrically equivalent 1-form. We recover the natural conditions on the almost contact structure, the structure Gauss and Codazzi equations, and the key lemmatta. All of them are very similar, but  slightly different from those obtained in \cite{AP}. The usual techniques for real hypersurfaces in complex space forms cannot be used directly, because we are moving from a Riemannian to a semi-Riemannian setting. Here, we will adapt the techniques of semi-Riemannian Geometry to these real hypersurfaces. 

Next, since an interesting, meaningful theory need examples, we construct new families of real hypersurfaces in Section \ref{examples}. We will call them \textit{real hypersurfaces of type $A_+$, $A_-$, $B_0$, $B_+$, $B_-$ and $C$}, because they  are somehow similar to those in  Takagi's and Montiel's lists (\cite{Mo}, \cite{Takagi}), but with some important differences. Of course, all our examples are Hopf  $(A\xi=\mu\xi$, $A$ the shape operator). We also call Type $C$ a \textit{horosphere}, due to its shape operator satisfies $A\xi=2\xi$ and $AX=X$ for any $X\perp\xi$, as a famous example in the complex hyperbolic space. In addition, we  exhibit an example of a degenerate (lightlike), Hopf real hypersurface. Thus, we positively answer the first open problem in \cite{AP}, since we show the existence of real hypersurfaces such that $A\xi=\mu\xi$ with $\vert\mu\vert<2$ and $\vert\mu\vert=2$.  We recall that J.~Berndt in \cite{Berndt} and the first author in \cite{K} proved a very useful result, namely, that a real hypersurface in a complex space form is Hopf and has constant principal curvatures if, and only if, it is one of the examples in Montiel's list and Takagi's list, respectively. Then, we hope that a similar result holds true in our setting.
\begin{conj*} Let $M$ be a non-degenerate real hypersurface in $\cpn$ whose shape operator is diagonalisable Then, $M$ is Hopf and all its principal curvatures are constant if, and only, if, $M$ is locally congruent to one of the examples $A_{+}$, $A_{-}$, $B_0$, $B_{+}$, $B_{-}$, or $C$.
\end{conj*}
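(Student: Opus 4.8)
The plan is to follow the strategy of the first author's Riemannian classification \cite{K}, suitably adapted to the indefinite setting, and to split the proof into the two implications. The ``only if'' direction should be the routine one: each of the candidate hypersurfaces $A_+, A_-, B_0, B_+, B_-, C$ is constructed explicitly in Section \ref{examples}, and a direct computation of its shape operator there exhibits it as Hopf with constant principal curvatures. Thus the real content lies in the converse.

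For the converse, assume $M$ is Hopf, $A\xi=\mu\xi$, with diagonalisable shape operator $A$ and constant principal curvatures. First I would exploit the structure Codazzi equation recalled in Section \ref{preliminaries} to derive the fundamental Hopf identity, the indefinite analogue of $2A\phi A=\mu(A\phi+\phi A)+2c\,\phi$, where $4c$ is the constant holomorphic sectional curvature. Evaluating this on an eigenvector $X\perp\xi$ with $AX=\lambda X$ shows that $\phi X$ is again principal with eigenvalue $\nu=(\mu\lambda+2c)/(2\lambda-\mu)$, giving a M\"obius-type pairing of the principal curvatures. Since $A$ is diagonalisable and $M$ is non-degenerate, the eigenspaces are mutually orthogonal, non-degenerate, and $\phi$ permutes them according to this pairing; analysing the fixed points and $2$-cycles of the pairing then constrains the number of distinct principal curvatures and their multiplicities, and --- crucially in the semi-Riemannian case --- the signature of each eigenspace.

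Next I would pass to the focal geometry via the normal exponential map of $M$ in $\cpn$. Here the causal character of the unit normal and the size of $\mu$ dictate which case occurs: for $|\mu|\neq 2$ one obtains genuine focal submanifolds and the eigenvalues evolve along the normal geodesics through trigonometric or hyperbolic cotangent laws (according to whether the relevant plane is spacelike or timelike), whereas the borderline value $|\mu|=2$ produces the parabolic behaviour characteristic of the horosphere $C$. Choosing the focal distance so that one family of eigenvalues blows up, I would show the focal set is a smooth non-degenerate submanifold, compute its second fundamental form from the Jacobi equation, and identify it as a totally geodesic complex submanifold (yielding types $A_\pm$) or a real form of $\cpn$ (yielding types $B_0, B_\pm$). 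Reconstructing $M$ as a tube of the appropriate radius over this focal submanifold, and matching the resulting shape operator with the explicit data of Section \ref{examples}, would complete the list.

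I expect the main obstacle to be precisely this focal/tube analysis. In the indefinite setting the normal exponential map is no longer governed by a single trigonometric law: spacelike, timelike and, at the boundary, null directions must be handled separately, the Jacobi fields obey different ODEs depending on signature, and one must rule out degeneracy of the focal set at each stage. A further genuinely new difficulty is the classification of the totally geodesic complex submanifolds and real forms of $\cpn$ that can arise as focal sets --- the indefinite analogue of the standard list of totally geodesic submanifolds of $\CPn$ --- together with the verification that the induced metric carries the correct signature to produce exactly the enumerated families. The parabolic case $|\mu|=2$, which has no non-degenerate focal submanifold and must instead be treated through a limiting horosphere construction, is likely to require separate and delicate arguments.
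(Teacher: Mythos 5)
This statement is posed in the paper as an open \emph{conjecture}: the paper contains no proof of it, only the constructions of Section \ref{examples}, which already settle what you call the routine (``only if'') direction. So the question is whether your proposal closes the converse, and it does not: it is a research program, not a proof. The steps you defer to the ``focal/tube analysis'' --- showing the focal set is a smooth non-degenerate submanifold, controlling Jacobi fields separately in spacelike/timelike directions, handling the parabolic case $\vert\mu\vert=2$ where no focal submanifold exists --- are precisely the open content of the conjecture, and you acknowledge leaving them unresolved. A reviewer cannot accept ``I expect the main obstacle to be...'' as a substitute for the argument.

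Beyond the deferred steps, one step of your outline is actually wrong, not merely incomplete: the claim that the focal submanifold can be identified as a \emph{totally geodesic} complex submanifold or a real form. This fails already in the Riemannian model you are adapting. In Takagi's list \cite{Takagi} there are homogeneous Hopf hypersurfaces with constant principal curvatures (the types usually called $C$, $D$, $E$) which are tubes over the Segre, Pl\"ucker and spinor embeddings --- complex submanifolds of $\CPn$ that are \emph{not} totally geodesic. Accordingly, Kimura's theorem \cite{K} is not proved by showing the focal variety is totally geodesic; it ultimately rests on Takagi's classification of homogeneous real hypersurfaces (equivalently, on the classification of K\"ahler immersions with parallel second fundamental form). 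In the indefinite setting neither a classification of homogeneous non-degenerate real hypersurfaces of $\cpn$ nor one of parallel complex submanifolds is available, so this step of your argument has no support whatsoever. Note also that the gap cuts both ways: your M\"obius pairing of eigenvalues does not bound the number of distinct principal curvatures by three, so you cannot exclude indefinite analogues of Takagi's types $C$, $D$, $E$ --- that is, it is not even clear that the conjectured list is complete, which is presumably part of why the authors state this as a conjecture rather than a theorem.
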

In Section \ref{ridigity}, first we prove that two real hypersurfaces with the same shape operator are linked by a holomorphic isometry of the ambient space. This was obtained by R.~Takagi in \cite{Takagi} for real hypersurfaces in the Riemannian complex projective space. Next, in Theorem \ref{casiumibilical}, we obtain the list of non-degenerate real hypersurfaces in indefinite complex projective space such that $AX=\lambda X+\rho \eta(X)\xi$, where $\lambda$ and $\rho$ are smooth functions. They were studied in complex space forms by S.~Montiel, \cite{Mo}, and by R.~Takagi, \cite{Takagi2}. Corollary \ref{Killing} is the answer to the second open problem  in \cite{AP}, because we classify those non-degenerate real hypersurfaces such that $A\phi=\phi A$. 

Finally, we would like to thank the referee for some useful comments, specially on the examples. 

\section{Preliminaries}\label{preliminaries}

Let $\CC^{n+1}_p$ be the Euclidean complex space endowed with the following hermitian product and pseudo-Riemannian metric of index $2p$, 
$z=(z_	1,\ldots,z_{n+1})$, $w=(w_1,\ldots,w_{n+1})\in\CC^{n+1}$, 
\begin{equation}\label{innerp} 
g_{\CC}(z,w)=-\sum_{j=1}^p z_j\bar{w}_j+\sum_{j=p+1}^{n+1}z_j\bar{w}_j, \quad 
g=\mathrm{Re}(g_{\CC}),
\end{equation}
where $\bar{w}$ is the complex conjugate of $w\in\CC$.  The natural complex structure will be denoted by $J$. As usual, we define the set $\mathbb{S}^1=\{a\in\CC : a\bar{a}=1\} = \{\mathrm{e}^{i\theta} : \theta\in\mathbb{R}\}$. We consider the hyperquadric 
\[ \mathbb{S}^{2n+1}_{2p}=\{z\in\CC ^{n+1}_p : g(z,z)=1\},
\]
which is a semi-Riemannian manifold of index $2p$. We define the action and its corresponding quotient
\[\mathbb{S}^1\times\mathbb{S}^{2n+1}_{2p}\rightarrow \mathbb{S}^{2n+1}_{2p}, \ (a,(z_1,\ldots,z_{n+1}))\mapsto (az_1,\ldots,az_{n+1}), 
\]
\[ \pi:\mathbb{S}^{2n+1}_{2p}\rightarrow \cpn=\mathbb{S}^{2n+1}_{2p}/\sim.
\]
Let $g$ be the metric on $\cpn$ such that $\pi$ becomes a semi-Riemannian submersion. The manifold $\cpn$ is called the \textit{Indefinite Complex Projective Space}. See \cite{BR} for details. We need $1\leq p\leq n-1$ to avoid $\mathbb{C}P^n$ with either a Riemannian or a negative definite metric. Let $\bar\nabla$ be its Levi-Civita connection. Then, $\cpn$ admits a complex structure $J$ induced by $\pi$, with Riemannian tensor
\begin{align}\nonumber 
\bar{R}(X,Y)Z =& g(Y,Z)X-g(X,Z)Y \\ & +g(JY,Z)JX-g(JX,Z)JY+2g(X,JY)JZ, \label{RR}
\end{align}
for any $X,Y,Z\in TM$. Thus, $\cpn$ has constant holomorphic sectional curvature $4$. 

Let $M$ be a connected, non-degenerate, immersed real hypersurface in $\cpn$. If $N$ is a local unit normal vector field such that $\ep=g(N,N)=\pm 1$, we define the \textit{structure} vector field on $M$ as $\xi=-JN$. Clearly, $g(\xi,\xi)=\ep$. Given $X\in TM$, the vector $JX$ might not be tangent to $M$. Then, we decompose it in its tangent and normal parts, namely
\[ JX = \phi X+\ep\eta(X)N,
\]
where $\phi X$ is the tangential part, and $\eta$ is the 1-form on $M$ such that 
\[\eta(X)=g(JX,N)=g(X,\xi).\]
In addition, $\eta(\phi X)=g(\phi X,\xi)=g(JX,-JN)=0$. This implies $\phi^2X=J\phi X=J(JX-\ep\eta(X)N)=-X-\ep\eta(X)JN$, so that
\[ \phi^2 X = -X+\ep\eta(X)\xi.\]
Next, for each $X,Y\in TM$, simple computations show
\begin{gather*}
g(\phi X,\phi Y) =g(JX-\ep\eta(X)N,JY-\ep\eta(Y)N) 
=g(X,Y)-\ep\eta(X)\eta(Y), \\
g(\phi X,Y)+g(X,\phi Y)=0.
\end{gather*}
From this formula, it is very simple to get $\phi\xi=0$. Also, from $\eta(X)=g(X,\xi)$, we obtain $\eta(\xi)=\ep$. Thus, the set $(g,\phi,\eta,\xi)$ is called an almost contact structure on $M$. 

Next, if $\nabla$ is the Levi-Civita connection of $M$,  we have the Gauss and Weingarten formulae:
\begin{equation}\label{GaussWeingarten}
 \bar\nabla_XY=\nabla_XY+\ep g(AX,Y)N, \quad
\bar\nabla_X N  = -AX,\end{equation}
for any $X,Y\in TM$, where $A$ is the shape operator associated with   $N$. Note that 
$\nabla_X\xi = \bar\nabla_X\xi -\ep g(AX,\xi)N = -J\bar\nabla_XN-\ep g(AX,\xi)N = JAX-\ep g(AX,\xi)N = \phi AX$, 
\[ \nabla_X\xi =\phi AX.\] 
The Codazzi equation is
\begin{align*}
(\nabla_XA)Y-(\nabla_YA)X = \eta(X)\phi Y-\eta(Y)\phi X+2g(X,\phi Y)\xi,
\end{align*}
for any $X,Y\in TM$. Let $R$ be the curvature operator of $M$. Then, by using \eqref{RR}, \eqref{GaussWeingarten}  and the structure  Gauss equation \cite[pag. 100]{ONeill}, we obtain 
\begin{align*}
R(X,Y)Z=&g(Y,Z)X-g(X,Z)Y+g(\phi Y,Z)\phi X-g(\phi X,Z)\phi Y\\
& -2g(\phi X,Y)\phi Z +\ep g(AY,Z)AX -\ep g(AX,Z)AY,
\end{align*}
for any $X,Y,Z\in TM$. As usual, the Ricci tensor is the trace of the Riemann tensor. Given a local orthonormal frame $(E_1,\ldots,E_{2n-1})$, with $\ep_i=g(E_i,E_i)=\pm 1$, we can compute 
%\begin{gather*}
$\mathrm{Ric}(X,Y) = \sum_i \ep_i g(R(X,E_i)E_i,Y)% \\
=(2n+1)g(X,Y)-3\ep\eta(X)\eta(Y)+ \ep\mathrm{tr}(A)g(AX,Y)-\ep g(A^2X,Y).
$ %\end{gather*}
If we call $S$ the associated metrically equivalent Ricci endomorphism, we obtain 
\[ SX = (2n+1)X-3\ep \eta(X)\xi +\ep \mathrm{tr}(A)AX -\ep A^2X,
\]
for any $X\in TM$. 
\begin{definition} Let $M$ be a real hypersurface in $\cpn$. We will say that $M$ is {\em Hopf} when its structure vector field $\xi$ is everywhere principal, i.~e., it is an eigenvector of $A$. 
\end{definition}
Its associated principal curvature can be defined as $\mu = \ep g(A\xi,\xi)$, and we will call it the \textit{Hopf curvature}. Therefore, it holds $A\xi =\mu \xi$. We recall the following basic results. 
\begin{theorem}\label{mu=constant} \cite{AP}
Let $M$ be a non-degenerate Hopf real hypersurface in $\cpn$ with $A\xi=\mu\xi$. Then, $\mu$ is (locally) constant.
\end{theorem}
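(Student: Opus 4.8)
The plan is to differentiate the Hopf condition $A\xi=\mu\xi$ and feed the result into the Codazzi equation. Applying $\nabla_X$ to $A\xi=\mu\xi$ and using $\nabla_X\xi=\phi AX$ gives
\[ (\nabla_X A)\xi=(X\mu)\xi+\mu\phi AX-A\phi AX, \]
for every $X\in TM$. This single identity carries most of the information, and the whole argument consists in reading off its various components.

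First I would pin down the part of $\mathrm{grad}\,\mu$ orthogonal to $\xi$. Pairing the displayed identity with $\xi$ and using $\phi\xi=0$, $A\xi=\mu\xi$ and $g(\xi,\xi)=\ep$, the last two terms drop out and I obtain $g((\nabla_X A)\xi,\xi)=\ep(X\mu)$. On the other hand, setting $Y=\xi$ in the Codazzi equation and using $\phi\xi=0$, $\eta(\xi)=\ep$ yields $(\nabla_X A)\xi-(\nabla_\xi A)X=-\ep\phi X$; pairing this with $\xi$ (so the $\phi X$ term vanishes) and using that $\nabla_\xi A$ is self-adjoint together with $(\nabla_\xi A)\xi=(\xi\mu)\xi$ gives $g((\nabla_X A)\xi,\xi)=(\xi\mu)\eta(X)$. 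Comparing the two expressions produces $\ep(X\mu)=(\xi\mu)\eta(X)$, that is, $\mathrm{grad}\,\mu=\ep(\xi\mu)\xi$. In particular $X\mu=0$ for every $X\perp\xi$, so $\mu$ is already constant along the maximal holomorphic distribution.

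It remains to kill $\xi\mu$. For this I would exploit the symmetry of the Hessian of $\mu$ (equivalently $d(d\mu)=0$). Writing $\alpha=\xi\mu$, differentiating $\mathrm{grad}\,\mu=\ep\alpha\xi$ and imposing $g(\nabla_X\mathrm{grad}\,\mu,Y)=g(\nabla_Y\mathrm{grad}\,\mu,X)$ gives, after inserting $\nabla_X\xi=\phi AX$,
\[ \ep(X\alpha)\eta(Y)+\ep\alpha\,g(\phi AX,Y)=\ep(Y\alpha)\eta(X)+\ep\alpha\,g(\phi AY,X). \]
Taking $X=\xi$, $Y\perp\xi$ forces $Y\alpha=0$, so $\alpha$ too is constant along $\xi^\perp$. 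Taking instead $X,Y\perp\xi$ shows that on the open set $U=\{\alpha\neq0\}$ one has $g(X,(A\phi+\phi A)Y)=0$ for all $X\in\xi^\perp$; since $(A\phi+\phi A)\xi=0$ and $(A\phi+\phi A)Y\in\xi^\perp$ whenever $Y\perp\xi$, nondegeneracy of $g$ on $\xi^\perp$ yields $A\phi+\phi A=0$ throughout $U$.

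The endgame is then algebraic, and this is where the main difficulty lies. Using once more that $\nabla_\xi A$ is self-adjoint, the first displayed identity produces the standard Hopf relation $2A\phi A=\mu(A\phi+\phi A)+2\ep\phi$; combined with $A\phi+\phi A=0$ on $U$ it gives $A\phi A=\ep\phi$, whence $A^2=-\ep\,\mathrm{Id}$ on the nondegenerate subbundle $\xi^\perp$. In the Riemannian situation this is an immediate contradiction, since a self-adjoint operator on a positive-definite space cannot square to $-\mathrm{Id}$; hence $U=\emptyset$ and $\xi\mu\equiv0$. The delicate point in the indefinite setting is precisely this last step: on an indefinite $\xi^\perp$ a $g$-self-adjoint operator may have non-real eigenvalues and can a priori satisfy $A^2=-\ep\,\mathrm{Id}$, so ruling this out requires the finer information coming from the index of $\xi^\perp$ and the causal character $\ep=\pm1$ (together, if necessary, with the remaining Codazzi data along the $\xi$-direction, noting that $\xi$ is geodesic since $\nabla_\xi\xi=\mu\phi\xi=0$). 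Once this possibility is excluded, $U$ is empty, $\xi\mu=0$, and with the previous paragraph $\mathrm{grad}\,\mu=0$, i.e. $\mu$ is locally constant.
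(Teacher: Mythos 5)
Your preparatory steps are sound, and they reproduce what the paper itself records (inside the proof of Lemma~\ref{AphiX}; note the paper only quotes Theorem~\ref{mu=constant} from \cite{AP} and gives no proof of it): differentiating $A\xi=\mu\xi$ and combining the Codazzi equation with the self-adjointness of $\nabla_\xi A$ gives $\mathrm{grad}\,\mu=\ep\,\xi(\mu)\,\xi$; Hessian symmetry then gives, on $U=\{\xi\mu\neq 0\}$, the identity $A\phi+\phi A=0$; and feeding this into $2A\phi A=\mu(A\phi+\phi A)+2\ep\phi$ (the operator form of the relation derived in Lemma~\ref{AphiX}) yields $A^{2}=-\ep\,\mathrm{Id}$ on $\xi^{\perp}$ over $U$. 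All of this is correct.

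The gap is the final step, and it is not a technicality: you never actually rule out the configuration $A\phi+\phi A=0$, $A^{2}=-\ep\,\mathrm{Id}$ on $\xi^{\perp}$; you only say that excluding it ``requires the finer information coming from the index of $\xi^{\perp}$ and the causal character $\ep$''. As you yourself observe, pointwise linear algebra cannot exclude it: for $\ep=-1$ the condition reads $A^{2}=\mathrm{Id}$ on $\xi^{\perp}$ and is realized by any self-adjoint involution whose $(\pm 1)$-eigenspaces are interchanged by $\phi$ --- this is precisely the hard case of the corresponding Riemannian theorem (the one for $\CHn$), where the known proofs require a substantial additional argument, not a signature count; for $\ep=+1$ the condition $A^{2}=-\mathrm{Id}$ is consistent with self-adjointness whenever $\xi^{\perp}$ has split signature, e.g.\ when $n-1=2p$. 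Consequently, a contradiction on $U$ can only come from differentiating these algebraic identities once more (applying Codazzi to vectors of $\xi^{\perp}$ and using the specific curvature of $\cpn$), and that argument --- the actual mathematical content of the theorem --- is absent from your proposal. As written, you have proved $\mathrm{grad}\,\mu=\ep\,\xi(\mu)\,\xi$ and reduced the theorem to excluding one algebraic alternative, but you have not proved that $\mu$ is constant.
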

Next lemma is essentially included in \cite{AP}, but we adapt it to our needs, and provide a shorter proof. 
\begin{lemma}\label{AphiX} Let $M$ be a non-degenerate Hopf real hypersurface in $\cpn$ with $A\xi=\mu\xi$. Assume that $X\in TM$ is a principal vector with associated principal curvature $\lambda$. Then,
\[(2\lambda-\mu)A\phi X  = (\lambda\mu +2\ep) \phi X.
\]
\end{lemma}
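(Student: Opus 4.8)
The plan is to first upgrade the Hopf condition into a single operator identity relating $A\phi A$ to $A\phi$ and $\phi A$, and then read off the stated formula by evaluating that identity on the principal vector $X$.

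First I would compute the covariant derivative of the shape operator along the Reeb vector field. Since $A\xi=\mu\xi$ with $\mu$ (locally) constant by Theorem \ref{mu=constant}, and since $\nabla_X\xi=\phi AX$, for every $X\in TM$ one gets
\[(\nabla_X A)\xi=\nabla_X(A\xi)-A(\nabla_X\xi)=\mu\phi AX-A\phi AX.\]
Combining this with the Codazzi equation evaluated at $Y=\xi$, and using $\phi\xi=0$, $\eta(\xi)=\ep$ together with $g(X,\phi\xi)=0$, the off-diagonal term drops and I obtain the operator expression
\[\nabla_\xi A=\mu\,\phi A-A\phi A+\ep\,\phi.\]

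Next I would exploit that $\nabla_\xi A$ is $g$-self-adjoint (because $A$ is self-adjoint, as the second fundamental form is symmetric, and $\nabla$ is a metric connection). Taking the $g$-adjoint of the displayed operator and using that $A$ is self-adjoint while $\phi$ is skew-adjoint, i.e. $g(\phi X,Y)+g(X,\phi Y)=0$, the requirement $\nabla_\xi A=(\nabla_\xi A)^{*}$ collapses to the clean identity
\[A\phi A=\tfrac{\mu}{2}(A\phi+\phi A)+\ep\,\phi.\]
Finally, applying this to the principal vector $X$ with $AX=\lambda X$ and using $\phi AX=\lambda\phi X$ yields $\lambda A\phi X=\tfrac{\mu}{2}A\phi X+\bigl(\tfrac{\lambda\mu}{2}+\ep\bigr)\phi X$, which rearranges to the claimed $(2\lambda-\mu)A\phi X=(\lambda\mu+2\ep)\phi X$.

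The conceptual core, and the only place where genuine care is needed, is the self-adjointness step: one must track precisely the signs coming from $\phi^{*}=-\phi$ and the factor $\ep$, which is exactly where the semi-Riemannian computation departs from the classical Riemannian one. Everything else (the formula for $(\nabla_X A)\xi$, the Codazzi substitution, and the final evaluation on $X$) is routine bookkeeping. I would also note that the case $X\parallel\xi$ is vacuous, since then $\phi X=0$ makes both sides vanish; more generally only the $\xi$-orthogonal part of $X$ enters, consistent with $\phi X=\phi(X-\ep\eta(X)\xi)$.
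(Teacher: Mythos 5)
Your proof is correct and follows essentially the same route as the paper: both derive the operator identity $2A\phi A=\mu(A\phi+\phi A)+2\ep\phi$ from the Codazzi equation together with a self-adjointness argument, and then evaluate on the principal vector $X$. The only cosmetic difference is that you invoke Theorem \ref{mu=constant} to discard the $X(\mu)$ terms and phrase the symmetry as self-adjointness of $\nabla_\xi A$, whereas the paper keeps the gradient terms (proving $\mathrm{grad}(\mu)=\ep\,\xi(\mu)\xi$ along the way) and obtains the same identity by symmetrizing $g((\nabla_XA)Y,\xi)$ in $X$ and $Y$.
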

\begin{proof}
Firstly, given any $X\in TM$,
\begin{align*}
& g((\nabla_XA)\xi,\xi)
= g(\nabla_XA\xi,\xi) - g(A\nabla_X\xi,\xi) =
g(\nabla_X(\mu\xi),\xi)-g(\nabla_X\xi,\mu\xi) \\
& = X(\mu)\ep =g((\nabla_{\xi}A)X,\xi)=g(\nabla_{\xi}AX,\xi)-\mu g(\nabla_{\xi}X,\xi) \\
&=\xi(g(AX,\xi))-g(AX,\nabla_{\xi}\xi)-\mu\xi(g(X,\xi)) +\mu g(X,\nabla_{\xi}\xi)\\
&=\xi(\mu)g(X,\xi)+\mu\xi(g(X,\xi))-g(AX,\phi A\xi)-\mu \xi(g(X,\xi)) 
+\mu g(X,\phi A\xi) \\ 
&=\xi(\mu)g(X,\xi).
\end{align*}
This shows
\[ \mathrm{grad}(\mu) = \ep \xi(\mu)\xi.
\]
Secondly, since $A$ is self-adjoint, we  have $g((\nabla_XA)Y,\xi)=g(Y,(\nabla_XA)\xi)$,
for any $X,Y\in TM$. Thus,
\begin{align*}
& g((\nabla_XA)Y,\xi) = g((\nabla_XA)\xi,Y)  = X(\mu)g(\xi,Y) +g((\mu I -A)\phi AX,Y) \\
&= \ep \xi(\mu)g(\xi,X)g(\xi,Y) +g((\mu I -A)\phi AX,Y) \\
&=g((\nabla_YA)X,\xi)+2\ep g(X,\phi Y) \\
&=\ep \xi(\mu)g(\xi,X)g(\xi,Y) +g((\mu I -A)\phi AY,X)+2\ep g(X,\phi Y)\\
&=\ep \xi(\mu)g(\xi,X)g(\xi,Y) -g(Y,A\phi(\mu I -A)X)-2\ep g(\phi X,Y),
\end{align*}
And we obtain 
\[ (\mu I-A)\phi AX = A\phi (A-\mu I)X-2\ep \phi X,
\]
for any $X\in TM$. By inserting $AX=\lambda X$, then
\[ \lambda\mu\phi X-\lambda A\phi X = (\lambda-\mu)A\phi X-2\ep\phi X,
\]
or equivalently,
\[ (\lambda\mu +2\ep) \phi X = (2\lambda-\mu)A\phi X.
\]
\end{proof}
\begin{cor}\label{mu=2}
If $\mu=2\lambda$, then $\ep=-1$, $\vert \mu\vert =2$ and $\vert\lambda\vert=1$.
\end{cor}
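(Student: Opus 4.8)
The plan is to substitute the hypothesis $\mu=2\lambda$ directly into the identity supplied by Lemma \ref{AphiX} and then read off the algebraic consequences. Starting from
\[ (2\lambda-\mu)A\phi X = (\lambda\mu+2\ep)\phi X,\]
setting $\mu=2\lambda$ makes the coefficient $2\lambda-\mu$ on the left-hand side vanish identically, so the entire left-hand side collapses to zero. What survives is $(\lambda\mu+2\ep)\phi X=0$, and since $\mu=2\lambda$ this is exactly $2(\lambda^2+\ep)\phi X=0$.

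The next step is to argue that $\phi X\neq 0$, so that the scalar factor must carry the vanishing. Recall from the preliminaries that $\phi^2 X=-X+\ep\eta(X)\xi$; hence $\phi X=0$ forces $X=\ep\eta(X)\xi$, i.e. $X$ is collinear with $\xi$. But if $X$ were a multiple of $\xi$ we would have $\lambda=\mu$, and then $\mu=2\lambda$ would give $\mu=0$, which is incompatible with the desired conclusion $\vert\mu\vert=2$. Thus the statement is understood for a principal vector $X$ not proportional to $\xi$, for which $\phi X\neq 0$, and we may cancel it to obtain $\lambda^2+\ep=0$, that is, $\lambda^2=-\ep$.

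Finally, I would invoke the reality of the principal curvature: since $\lambda\in\RR$ we have $\lambda^2\geq 0$, so $-\ep\geq 0$, and because $\ep=\pm 1$ this forces $\ep=-1$. Then $\lambda^2=1$ yields $\vert\lambda\vert=1$, and $\mu=2\lambda$ gives $\vert\mu\vert=2\vert\lambda\vert=2$, completing the argument. The computation is essentially immediate once Lemma \ref{AphiX} is in hand; the only point demanding care is excluding $\phi X=0$, which is settled by the fact that the kernel of $\phi$ is spanned by $\xi$.
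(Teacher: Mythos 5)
Your proof is correct and follows essentially the same route as the paper: substitute $\mu=2\lambda$ into Lemma \ref{AphiX} to kill the left-hand side, deduce $2\lambda^2+2\ep=0$, and conclude from $\ep=\pm 1$ that $\ep=-1$, $\vert\lambda\vert=1$, $\vert\mu\vert=2$. Your explicit justification that $\phi X\neq 0$ (excluding principal vectors collinear with $\xi$) is a point the paper's one-line proof leaves implicit, and it is handled correctly.
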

\begin{proof} If $\mu=2\lambda$, then we have $0=\lambda\mu+2\ep = 2\lambda^2+2\ep$. Since $\ep=\pm 1$, we immediately obtain the result. 
\end{proof}
We put $\hat{\lambda}$ the principal curvature associated with $\phi X$, provided that $X\perp \xi$ is principal with principal curvature $\lambda$. Lemma \ref{AphiX} and Corollary \ref{mu=2} allow to construct the following table:
\begin{center}
\begin{tabular}{|r|l|} \hline
$\ep=+1$, & $\mu=2\cot(2r)$, $r\in(0,\pi/2)$, $\lambda=\cot(r+\theta)$, $\hat{\lambda}=\cot(r-\theta)$. \\ 
& $\mu=2\tan(2r)$, $r\in(-\pi/4,\pi/4)$, $\lambda=\tan(r+\theta)$, \\ 
& \quad   $\hat{\lambda}=-\cot(r-\theta)$. \\ \hline 
$\ep =-1$, & $\mu=2\coth(2r)$, $r>0$, $\lambda=\coth(r+\theta)$, $\hat{\lambda}=\coth(r-\theta)$; \\ 
& \quad or $\lambda=\tanh(r+\theta)$, $\hat{\lambda}=\tanh(r-\theta)$.  \\
& $\mu=2\tanh(2r)$, $r>0$, $\lambda=\coth(r+\theta)$, $\hat{\lambda}=\tanh(r-\theta)$; \\
& \quad  or   $\lambda=\tanh(r+\theta)$, $\hat{\lambda}=\coth(r-\theta)$. \\
& $\mu=2$, $\lambda\neq 1$, $\hat{\lambda}=1$. \\  \hline 
\end{tabular}
\end{center}

Since $ \pi:\mathbb{S}^{2n+1}_{2p}\rightarrow \cpn$ is a semi-Riemannian submersion and a
\textit{principal fiber bundle with structure Lie group $\mathbb{S}^1$}, we can call it the
\textit{Hopf map}. In addition, given a real hypersurface $M^{2n+1}$ in $\cpn$, then we construct its lift $\tilde{M}^{2n}$, i.e., the following commutative diagram:
\[\begin{CD}
\tilde{M}^{2n} @>>> \mathbb{S}^{2n+1}_{2p} \\
@VVV @VVV \\
M^{2n-1} @>>> \cpn
\end{CD}
\]
We call $\tD$, $D$ and $\bD$, respectively, the Levi-Civita connection of $\tilde{M}$,
$\mathbb{S}^{2n+1}_{2p}$ and $\CC^{n+1}_p$. Let  $\chi:\snp \rightarrow \CC^{n+1}_{p}$ be the position vector, which also plays the role of a unit normal space-like vector field. Note that associated Weingarten endomorphism is $A_{\chi}X=-X$, for any $X\in T\snp$. In general, if $X$ is tangent to $\cpn$ at a given point, we denote $\tilde{X}$ or $X^{\sim}$ its horizontal lift to $\mathbb{S}^{2n+1}_{2p}$. Then, $\tilde{N}$ is going to be the horizontal lift of $N$. This implies that the  horizontal lift of $\xi$ is $\tilde{\xi}=-J\tilde{N}$. The vertical part of $\pi$ is spanned by $J\chi$, which is also space-like. The shape operator of $\tilde{M}$ associated with $\tilde{N}$ is going to be $A_{\tilde{N}}$. Our next target is to compute this operator. Given $X\in TM$, we compute, $g\big(D_{\tilde{X}}\tilde{N},J\chi)
=-g(\tilde{N},\bD_{\tilde{X}}J\chi) = -g(\tilde{N},J\bD_{\tilde{X}}\chi) 
= -g(\tilde{N},J\tilde{X})
= -g(\tilde{\xi},\tilde{X}) = -g(\xi,X).$ Then, we have
$D_{\tilde{X}}\tilde{N} = - A_{\tilde{N}}\tilde{X} = \big( \nabla_XN\big)^{\sim}
+g\big(D_{\tilde{X}}\tilde{N},J\chi)J\chi
= - (AX)^{\sim}-g(\xi,X)J\chi.$
Next, we denote by $\mathcal{H}X$ the horizontal
part of any $X\in T\snp$. In this way, $A_{\tilde{N}}(J\chi) = \mathcal{H}A_{\tilde{N}}(J\chi)  +
g(A_{\tilde{N}}(J\chi) ,J\chi)J\chi$. Given $X\in TM$, we can compute  
$g(A_{\tilde{N}}(J\chi),\tilde{X}) =g(J\chi,A_{\tilde{N}}\tilde{X}) = g(\tilde{\xi},\tilde{X})=g(\xi,X)$, which implies
$\mathcal{H}A_{\tilde{N}}(J\chi) = \tilde{\xi}$. Also, $g(A_{\tilde{N}}(J\chi) ,J\chi) =
-g(D_{J\chi}\tilde{N},J\chi)=g(\tilde{N},\bD_{J\chi}J\chi) =
g(\tilde{N},J^2\chi)=0.$ Summing up, for any $X\in TM$, we obtain 
\begin{equation}\label{shapeopchi}
A_{\tilde{N}}\tilde{X}  = (AX)^{\sim}+g(\xi,X)J\chi, \quad
A_{\tilde{N}}J\chi = \tilde{\xi}.
\end{equation}

It is important to point out  that a real hypersurface in $\cpn$ is a semi-Riemannian
submanifold of arbitrary index, and therefore, its shape operator $A$ might not be diagonalisable

\section{Examples}\label{examples}

\begin{example} \label{typeA} \textbf{Type A.} \ 
Consider $t\in\RR$, $t\neq 0,1$, and $0\leq q\leq p\leq m\leq n+2$, $m>q+1$.  We define the following maps  $\q,\qq:\CC^{n+1}_p\rightarrow\CC^{n+1}_p$. Given $z\in \mathbb{C}^{n+1}_p$, the case $q=0$ and $m=n+2$ is not considered, and 
\begin{trivlist}
\item $\bullet$ if $1\leq q$ and $m\leq n+1$, $\q(z)=(z_1,\ldots,z_{q},0,\ldots,0,z_{m},\ldots,z_{n+1})$, \\
$\qq(z)=(0,\ldots,0,z_{q+1},\ldots,z_{m-1},0,\ldots,0)$; 
\item $\bullet$ if $q=0$ and $m\leq n+1$, $\q (z)=(0,\ldots,0,z_m,\ldots,z_{n+1})$,  \\
$\qq(z)=(z_1,\ldots,z_{m-1},0,\ldots,0)$; 
\item $\bullet$ if $1\leq q$ and $m=n+2$,  $\q(z)=(z_1,\ldots,z_q,0,\ldots,0)$,\\ $\qq(z)=(0,\ldots,0,z_{q+1},\ldots,z_{n+1})$. 
\end{trivlist}
With this notation, we define the following hypersurface
\begin{align*} \mtqm = & \left\{ z=(z_1,\ldots,z_n)\in \snp \ : \ g(\q(z),\q(z))=t \right\} \\
= & \left\{z=(z_1,\ldots,z_n)\in \snp \ : \ g(\qq(z),\qq(z))=1-t \right\}.
\end{align*}
We will study the cases when it is not the empty set. 
This hypersurface is $\mathbb{S}^1$-invariant, so it defines a real hypersurface
$\mqm=\pi(\mtqm)\subset \cpn$. The tangent plane at $z\in\mtqm$ is
\begin{align*}
T_z\mtqm &= \left\{ X\in \mathbb{C}^{n+1}_p \ : \ g(X,\chi_z)=0, \ g(X,\q(z))=0 \right\}.
\end{align*}
We see that $g(\q,\qq)=g(J\q,\qq)=0$. If we call $\ep=\mathrm{sign}((1-t)t)=\pm 1$, we
can choose a unit, normal vector field $\tilde{N}$ in $\snp$ at $z\in\mtqm$, 
\[\tilde{N}_z= \alpha \q(z) +\beta \qq(z), \quad
\alpha = \frac{1-t}{\sqrt{\ep t(1-t)}}, \ \beta= \frac{-t}{\sqrt{\ep t(1-t)}}, \ g(\tilde{N},\tilde{N})=\ep.
\]
It is clear that  $N=\pi_*\tilde{N}$, $\tilde\xi=-J\tilde N=-\alpha J\q-\beta J\qq$, $\xi=\pi_*\tilde\xi$. Now, given $X\in T\mqm$ and its horizontal lift $\tilde X=(X_1,\ldots,X_{n+1})$, we have $(AX)^{\sim}=A_{\tilde{N}}\tilde{X} -g(\xi,X)J\chi
=-\bD_{\tilde{X}}\tilde{N}-g(\xi,X)J\chi$, i.~e.,
\begin{equation}\label{Atilde}
(AX)^{\sim}=-\alpha \q(\tilde{X})-\beta\qq(\tilde{X})-g(\xi,X)J\chi.
\end{equation}
With this, given $X\in T\mqm$ such that $X\perp \xi$, we have
\begin{align*}
&g(AX,\xi) =g(A_{\tilde N}\tilde X,\tilde \xi) =
\alpha^2 g(\q(X),J\q)+\beta^2 g(\qq(X),J\qq) =0.
\end{align*}
In other words, the real hypersurface $\mqm$ in $\cpn$ is Hopf, with $A\xi=\mu\xi$. Next, 
$\ep\mu = g(A\xi,\xi) = g(A_{\tilde{N}}\tilde{\xi},\tilde{\xi}) = g(-\alpha \q(\tilde{\xi})-\beta\qq(\tilde{\xi}),-\alpha J\q-\beta J\qq) =\frac{2t-1}{\ep\sqrt{\ep t(1-t)} }.$ In this way,
\[ \mu = \frac{2t-1}{\sqrt{\ep t(1-t)} }.
\]
We call $\tilde{\D}$ the horizontal lift of $\D=\mathrm{Span}\{\xi\}^{\perp}\subset T\mqm$. Clearly,  $\tilde{\D}=\mathrm{Span}\{\chi,J\chi,\tilde{N},\tilde{\xi}\}^{\perp}$.

On the other hand, assume that $X\in T\mtqm$. This is equivalent to $g(X,\chi)=g(X,\tilde{N})=0$, which imply $0=g(\q(X),\q)+g(\qq(X),\qq) = \alpha g(\q(X),\q)+\beta g(\qq(X),\qq)$. Since $\alpha\neq \beta$, we obtain
$g(\q(X),\q)=g(\qq(X),\qq)=0$. If, in addition, we take $X\perp \{J\chi,\tilde{\xi}\}$, we obtain $0=g(\q(X),J\q)=g(\qq(X),J\qq)$. We take now $X\in \D$, and its horizontal lift $\tilde{X}\in \tilde{\D}$. We claim that $\q(X)\in \tilde{\D}$. Indeed, $g(\q(X),\chi)=g(\q(X),\q+\qq)=g(\q(X),\q)=0$. Similarly, we have  $g(\q(X),\tilde{N})=\alpha g(\q(X),\q)+\beta g(\q(X),\qq)=0$, and also we obtain $g(\q(X),J\chi)=g(\q(X),\tilde{\xi})=0$. The same conditions hold for $\qq$. In other words, we can restrict $\q,\qq:\tilde{\D}\rightarrow\tilde{\D}$. Now, due to \eqref{Atilde}, $-\alpha$ and $-\beta$ are the other principal curvatures of $\mqm$. In this way, the eigenspaces are  $V_{-\alpha}=\pi_*(\tilde{\D}\cap\ker \qq)$ and  $V_{-\beta}=\pi_*(\tilde{\D}\cap\ker\q)$. Next, we can also restrict $\mathfrak{q}_i:\mathrm{Span}\{\chi,J\chi,\tilde{N},\tilde{\xi}\}
\rightarrow \mathrm{Span}\{\chi,J\chi,\tilde{N},\tilde{\xi}\}$. This shows $\dim V_{-\alpha}=\dim (\tilde{\D}\cap\ker\qq) =\dim\tilde{\D}+\dim\ker\qq-\dim(\tilde{\D}+\ker\qq)
=2(m-q-2)$. Similarly, $\dim V_{-\beta}=\dim (\tilde{\D}\cap\ker\q) =\dim\tilde{\D}+\dim\ker\q-\dim(\tilde{\D}+\ker\q)
=2(n+q-m+1)$. Note that $\dim V_{-\alpha}+\dim V_{-\beta}=2(n-1)$. 

Now, we make a study of the principal curvatures by paying attention to the possible values of  $t$. We choose suitable $r>0$ at each case, and introduce some names:
\begin{itemize}
\item[$(A_{+})$] $\ep=+1$, $0<t=\cos^2(r)<1$, $\mu=2\cot(2r)$, $\lambda=-\tan(r)$, $\dim V_{\lambda_1}=2(m-q-2)$, $\lambda_2=\cot(r)$, $\dim V_{\lambda_2} =2(n+q-m+1)$.

\item[$(A_{-})$]   $\ep=-1$, $1<t=\cosh^2(r)$, $\mu=2\coth(2r)$, $\lambda_1=-\tanh(r)$, $\dim V_{\lambda_1}=2(m-q-2)$, $\lambda_2=\coth(r)$, $\dim V_{\lambda_2}=2(n+q-m+1)$. 
\end{itemize}
Note that $\dim V_{\lambda_1}=0$ if, and only if, $m=q+2$, if and only if, $\dim V_{\lambda_2}=2n-2$ (recall that $q\leq p\leq m$ with $m>q+1$.) Similarly, $\dim V_{\lambda_2}=0$ if, and only if, $\dim V_{\lambda_1}=2(n-1)$, if, and only if, $m=n+q+1$. Since $m\leq n+2$, then $q\leq 1$. 
\end{example}

\begin{example} \label{typeB} \textbf{Type B. } Given $t>0$, $t\neq 1$, we consider the polynomial  $Q(z)=-\sum_{j=1}^pz_j^2+\sum_{j=p+1}^{n+1}z_j^2=g_{\mathbb{C}}(z,\overline{z})$, and we define the following hypersurface
\[\mathbf{\tilde{M}}_t=\left\{ z=(z_1,\ldots,z_{n+1})\in \snp \ : \
Q(z)\overline{Q(z)}=t\right\}.
\]
Note that $\overline{Q(z)}=Q(\overline{z})$. As before, this set is invariant under the action of $\mathbb{S}^1$, so $\mathbf{M}_t=\pi(\mathbf{\tilde{M}}_t)$ is a real hypersurface in $\cpn$.  
One can see that
\[T_z\mathbf{\tilde{M}}_t = \left\{ X=(X_1,\ldots,X_{n+1})\in \CC^{n+1}_{p} \ : \
 g(X,z)=g(X,Q(z)\overline{z})=0
\right\}.\]
From this, if we set $\ep=\mathrm{sign}(t(1-t))=\pm 1$, we can obtain a unit normal vector field 
\[ \tilde{N}_z = \frac{1}{\sqrt{\ep t(1-t)}}(Q(z)\overline{z}-t z), \quad z\in\mathbf{\tilde{M}}_t.
\]
It also holds $g(\tilde{N},\tilde{N})=\ep$, as expected. Given $X\in T_z\mathbf{\tilde{M}}_t$,  a simple computation shows
\[ A_{\tilde{N}}X = \frac{-1}{\sqrt{\ep t(1-t)}}\left( 2g_{\CC}(z,\overline{X})\overline{z}+Q(z)\overline{X}-tX\right).
\]
Given $a\in \CC$, $g(aX,Y)=g(X,\bar{a}Y)$, for any tangent vectors $X,Y$. Also, $g(X,Y)=g(\overline{X},\overline{Y})$. We put $\alpha=1/\sqrt{\ep t(1-t)}$.  If $X\perp \tilde{N},\tilde{\xi},\chi,J\chi$, then 
\[ g(X,z)=g(X,iz)=g(X,Q(z)\overline{z})=g(X,iQ(z)\overline{z})=0.
\]
We want to show that $\mathbf{M} _t$ is Hopf. To do so, given $X\in T\mathbf{M}_t$, $X\perp\xi$, we put $Y=\tilde{X} = (X_1,\ldots,X_{n+1})$. Since $\tilde{\xi}_z = i\alpha \big( tz-Q(z)\overline{z})$, then, 
$g(A_{\tilde{N}}Y,\tilde{\xi})_z = g\big(-\alpha(2g_{\CC}(z,\overline{Y})\overline{z}+Q(z)\overline{Y}-tY),\tilde{\xi}\big)
= -\alpha g\big( 2g_{\CC}(z,\overline{Y})\overline{z}+Q(z)\overline{Y},\tilde{\xi}\big)$, 
and so
\begin{align*}
&g\big( 2g_{\CC}(z,\overline{Y})\overline{z}+Q(z)\overline{Y},\tilde{\xi}\big) = 
g\big( 2g_{\CC}(z,\overline{Y})\overline{z}+Q(z)\overline{Y},i\alpha(tz-Q(z)\overline{z}\big) \\
&=2\alpha t g\big( g_{\CC}(z,\overline{Y})\overline{z},iz) 
- 2\alpha g( g_{\CC}(z,\overline{Y})\overline{z},iQ(z)\overline{z}) \\
&\quad  +\alpha t g(Q(z)\overline{Y},iz) - \alpha g(Q(z)\overline{Y},iQ(z)\overline{z}).
\end{align*}
To make the computations shorter,  we will use suitable $\beta_l=\pm 1$ in \eqref{innerp}, so that 
$g_{\mathbb{C}}(X,Z)=\sum_l \beta_l z_l\bar{w}_l$. Next,
\begin{align*}
&g\big( g_{\CC}(z,\overline{Y})\overline{z},iz) = 
\mathrm{Re}\Big( \sum_l \beta_l \big(\sum_j \beta_jz_jX_j\big)\overline{z}_l(-i)\overline{z}_l\Big) \\
& =\mathrm{Re}\Big( -i \sum_j\beta_jX_j\overline{ Q(z)\overline{z}_j} \Big)  
=g(-iY,Q(z)\overline{z})=0. \\
&g\Big( g_{\CC}(z,\overline{Y})\overline{z},iQ(z)\overline{z}\Big) 
= \mathrm{Re}\Big(\sum_l\beta_l g_{\CC}(z,\overline{Y})\overline{z}_l\,\overline{ i Q(z) \overline{z}_l} \Big)
\\ 
&  = \mathrm{Re}\Big(\sum_l\beta_l g_{\CC}(z,\overline{Y})\overline{z}_l\,(-i)\overline{Q(z)} z_l \Big)
= -\mathrm{Re}\Big( g_{\CC}(z,\overline{Y}) i \overline{Q(z)}\Big) \\
&=- g(i\,\overline{Q(z)}z,\overline{Y}) =  g(i Q(z)\overline{z},Y)=0. 
\end{align*}
In a similar way, we obtain $g(Q(z)\overline{Y},iz)=g(Q(z)\overline{Y},iQ(z)\overline{z})=0$. All of them imply $g(AX,\xi)=g(A_{\tilde{N}}Y,\tilde{\xi})=0.$

Now, we want to compute the  principal curvature $\mu$ associated with $\xi$. We put $X=\tilde{\xi}=i\alpha \big( tz-Q(z)\overline{z})$, then, $\overline{X}=i\alpha(\overline{Q(z)}z-t\overline{z})$. With this, we have 
\begin{align*} &g_{\CC}(z,\overline{X}) = g_{\CC}\Big(z,i\alpha\big(\overline{Q(z)}z-t\overline{z}\big)\Big) 
=-i\alpha g_{\CC}\Big(z,\overline{Q(z)}z\Big)+ti\alpha g_{\CC}(z,\overline{z}) \\
&=-i\alpha Q(z) g_{\CC}(z,z) +ti\alpha Q(z) = (t-1)i\alpha Q(z).
\end{align*}
Next, 
\begin{align*}
&2g_{\CC}(z,\overline{X})\overline{z}+Q(z)\overline{X} 
= 2(t-1)i\alpha Q(z)\overline{z}+Q(z)i\alpha \Big(\overline{Q(z)}z-t\overline{z}\Big) \\
&
=(t-2)i\alpha Q(z)\overline{z}+i\alpha t z  = \tilde{\xi}+(t-1)i\alpha Q(z)\overline{z}.
\end{align*}
Now, $A_{\tilde{N}}\tilde{\xi} = \alpha (t-1)\tilde{\xi} -\alpha^2 (t-1) i Q(z)\overline{z}.$ 
Next, we compute 
\begin{align*}
&g(iQ(z)\overline{z},\tilde{\xi}) = g(iQ(z)\overline{z},i\alpha(tz-Q(z)\overline{z})) \\
& = \alpha t g(Q(z)\overline{z},z) -\alpha g(Q(z)\overline{z},Q(z)\overline{z}) \\
&=\alpha t \mathrm{Re}\Big(Q(z)\overline{Q(z)}\Big) 
-\alpha  \mathrm{Re}\Big(Q(z)\overline{Q(z)}\Big) = \alpha t^2-\alpha t=\alpha t(t-1).
\end{align*}
We come back, 
$\mu\ep =\alpha (t-1)\ep -\alpha^2(t-1)\alpha t(t-1)
= \frac{2(t-1)\ep}{\sqrt{\ep t(1-t)}}.$
Finally,
\[ \mu = \frac{2(t-1)}{\sqrt{\ep t(1-t)}}.\]

When $0<t<1$, we put $t=\sin^2(2r)$ for some  $r\in(0,\pi/4)$, obtaining $\mu = 2\cot(2r)$. When $t>1$, we put $t=\cosh^2(2r)$ for some  $r>0$, obtaining $\mu = 2\tanh(2r)$. 

Next, if we take $X\perp \xi$, its horizontal $Y=\tilde{X}$ is orthogonal to $\{\chi, J\chi, \tilde{N}, \tilde{\xi}\}$. 
From this, at any $z\in\mathbf{\tilde{M}}_t$, it is easy to see $0=g(Q(z)\overline{z},Y) = g(iQ(z)\overline{z},Y)$. Both expressions imply $0=Q(z)\left[-\sum_{j=1}^p\overline{z}_j\overline{Y}_j+\sum_{j=p+1}^{n+1}\overline{z}_j\overline{Y}_j\right]$. As
 $Q(z)\neq 0$, and by taking complex conjugate, we get  $g_{\CC}(z,\overline{Y})=0$. Therefore, 
\[ A_{\tilde{N}} Y = \alpha (t Y-Q(z)\overline{Y}),\quad  Y\perp \{\chi, J\chi, \tilde{N}, \tilde{\xi}\}. 
\]
We see that for any $\lambda\in\mathbb{S}^1$, given $z\in\mathbf{\tilde{M}}_t$, $Q(\lambda z)=\lambda^2Q(z)$. This shows that for each $x\in \mathbf{M}_t$, there exists $z\in\mathbf{\tilde{M}}_t$ such that $x=\pi(z)$ and $Q(z)=\sqrt{t}$. From now, we work at such $z$. Thus,
\[ A_{\tilde{N}} Y = \alpha  (t Y-\sqrt{t}\,\overline{Y}),\quad  Y\perp \{\chi, J\chi, \tilde{N}, \tilde{\xi}\}. 
\]
Next, if $g_{\mathbb{C}}(Y,\overline{z})=g_{\mathbb{C}}(Y,z)=0$, then $\overline{Y}\in\tilde{\mathbb{D}}_z$.  Now,  if $Y=\overline{Y}$, then $A_{\tilde{N}}Y=\alpha (t-\sqrt{t})Y$, whereas if $\overline{Y}=-Y$, then $A_{\tilde{N}}Y=\alpha (t+\sqrt{t})Y$. 
Thus, these are the two other principal curvatures of $\mathbf{M}_t$, with both multiplicities $n-1$. In the following list, the numbers $m_i$ denote the dimension of the associated eigenspaces. We compute the principal curvatures, and introduce some names.
\begin{itemize}
\item[($B_{+}$)] $\ep=+1$,  $0<t=\sin^2(2r)<1$, $\mu=2\cot(2r)$, $\lambda_1=\cot(r)$, $m_1=n-1$, $\lambda_2=\tan(r)$, $m_2=n-1$, $\phi V_{\lambda_1}=V_{\lambda_2}$.

\item[($B_{0}$)] $\ep=-1$, $\mu=\sqrt{3}$, $\lambda=1/\sqrt{3}$, $\dim V_{\mu}=n$, $\dim V_{\lambda}=n-1$, $\phi V_{\mu}=V_{\lambda}$, $\xi \in V_{\mu}$. 

\item[($B_{-}$)] $\ep=-1$, $1< t=\cosh^2(2r)$, $\mu=2\tanh(2r)$, $\lambda_1=\coth(r)$, $m_1=n-1$, $\lambda_2=\tanh(r)$,  $m_2=n-1$, $\phi V_{\lambda_1}=V_{\lambda_2}$. 
\end{itemize}

For $0<t<1$, we show that this real hypersurface is a tube over a complex quadric. Indeed, we consider the set 
\[  \tilde{\mathbb{Q}}^{n-1} =  \left\{ (z_1,\ldots,z_{n+1})\in\snp : Q(z)=0 \right\}, \quad 
\mathbb{Q}^{n-1} = \pi \big(  \tilde{\mathbb{Q}}^{n-1} \big).
\]
Clearly, $\tilde{\mathbb{Q}}^{n-1}$ is the lift of $\mathbb{Q}^{n-1}$. A simple computation gives
\[ T_z\tilde{\mathbb{Q}}^{n-1} = \left\{ X=(X_1,\ldots,X_{n+1})\in \CC^{n+1}_{p} \ : \
 g(X,z)=g_{\CC}(X,\overline{z})=0 \right\}.\]
Thus, an orthonormal normal frame on $\tilde{\mathbb{Q}}^{n-1}$ in $\mathbb{C}^{n+1}_{p}$ is 
$\chi(z)=z$, $\eta_1(z)=\overline{z}$, $\eta_2(z)=i\overline{z}$, for each $z\in\tilde{\mathbb{Q}}^{n-1}$. All three are spacelike. The following geodesic of $\mathbb{S}_{2p}^{2n+1}$ is normal to $\tilde{\mathbb{Q}}^{n-1}$, which projects to a geodesic of $\mathbb{C}P_p^n$ which is normal to $\mathbb{Q}^{n-1}$. For $s,\theta\in\RR$, starting at $z\in\tilde{\mathbb{Q}}^{n-1}$, 
\[ \gamma_{\theta}(s) = \cos(s)z+\sin(s)\big( \cos(\theta)\eta_1(z)+\sin(\theta)\eta_2(z)\big), \ z\in\tilde{\mathbb{Q}}^{n-1}.\]
Since  $z\in\tilde{\mathbb{Q}}^{n-1}\subset \mathbb{S}_{2p}^{2n+1}$,  the following equations hold:
\begin{gather*}
0=Q(z)=g_{\mathbb{C}}(z,\overline{z}), \ \overline{Q(z)}=g_{\mathbb{C}}(\overline{z},z)=0, \ g_{\mathbb{C}}(z,z)=g_{\mathbb{C}}(\overline{z},\overline{z})=1, \\
g_{\mathbb{C}}(z,iz)=-i g_{\mathbb{C}}(z,z)=-i, \ 
g_{\mathbb{C}}(\overline{z},iz)=-i g_{\mathbb{C}}(\overline{z},z)=0. 
\end{gather*}
With them, and by the fact that $g_{\mathbb{C}}$ is bilinear, it is easy to compute
$Q(\gamma_{\theta}(s))=g_{\mathbb{C}}\Big(\gamma_{\theta}(s),\overline{ \gamma_{\theta}(s)}\Big)=\sin(2s)[\cos(\theta)+i\sin(\theta)].$ 
In particular, $Q(\gamma_{\theta}(s)) \overline{Q(\gamma_{\theta}(s))}=\sin^2(2s)$. This means that the set $\mathbf{M}_t$ is a tube of radius $s\in ]0,\pi/4[$ over $\mathbb{Q}^{n-1}$. 
\end{example}
\smallskip
\begin{example} \normalfont \textbf{A degenerate example.}
When $t=1$, the computations are very similar, but there are some differences. Recall $Q(z)=-\sum_{j=1}^pz_j^2+\sum_{j=p+1}^{n+1}z_j^2=g_{\mathbb{C}}(z,\overline{z})$.  We define the $\mathbb{S}^1$ invariant hypersurface 
\[\mathbf{\tilde{M}}_1=\left\{ z=(z_1,\ldots,z_{n+1})\in \snp \ : \
Q(z)\overline{Q(z)}=1, \ 
\mathrm{rank}_{\mathbb{R}}\{z,iz,\overline{z},i\overline{z}\}=4 
\right\}.
\]
The tangent space is 
\[T_z\mathbf{\tilde{M}}_1 = \left\{ X=(X_1,\ldots,X_{n+1})\in \CC^{n+1}_{p} \ : \
 g(X,z)=g(X,Q(z)\overline{z})=0
\right\}.\]
The hypersurface has no tangent plane at the points such that $\{z,iz,\overline{z},i\overline{z}\}$ are $\mathbb{R}$-linearly dependent. For example, $z=(0,\ldots,0,1)$ is one of them. Given $b\in\mathbb{S}^1$, $b\neq \pm 1$ ($p\leq n-1$), then $(1,0,\ldots,b,1)\in\mathbf{\tilde{M}}_1$. It is clear that $\chi(z)=z$ and $\tilde{\chi}(z)=Q(z)\overline{z}$ provide two  normal vector fields to $\mathbf{\tilde{M}}_1$ in $\mathbb{C}_p^{n+1}$. From them, a lightlike normal vector field in $\snp$ is  
\[ \tilde{N}_z = Q(z)\overline{z}-z, \quad z\in\mathbf{\tilde{M}}_1.
\]
Hence, the hypersurface is degenerate. As expected, $g(\tilde{N}_z,z)=g(\tilde{N},Q(z)\overline{z})=0$, that is to say, $\tilde{N}_z\in T_z \mathbf{\tilde{M}}_1$.  Also $g(\tilde{N},J\chi)=0$.  As usual, we put $\tilde{\xi}=-J\tilde{N}$.  

We will use the following natural definition of the shape operator. Given $X\in T_z\mathbf{\tilde{M}}_1$, 
\begin{equation}\label{AN} A_{\tilde{N}}X = -D_X\tilde{N}=
-2g_{\CC}(X,\overline{z})\overline{z}-Q(z)\overline{X}+X. 
\end{equation} 
The distribution $\tilde{\mathbb{D}}_z=T_z\mathbf{\tilde{M}}_1\cap JT_z\mathbf{\tilde{M}}_1$ is clearly complex. Note that $\tilde{N},\tilde{\xi}\in\tilde{\mathbb{D}}$.  Since $0=g_{\mathbb{C}}(X,W)$ iff $0=g_{\mathbb{C}}\big(X,\overline{Q(z)}W\big)$, we can see that $\tilde{\mathbb{D}}_z=\mathrm{Span}\{z,iz,Q(z)\overline{z},iQ(z)\overline{z}\}^{\perp}=\{X\in \mathbb{C}_p^{n+1} : g_{\mathbb{C}}(X,z)=g_{\mathbb{C}}(X,\overline{z})=0\}$. We easily  compute  $A_{\tilde{N}}\tilde{N}=2\tilde{N}$, $A_{\tilde{N}}\tilde{\xi}=0$ and $A_{\tilde{N}} J\chi = \tilde{\xi}$. 

Let $\mathbf{M}_1=\pi(\mathbf{\tilde{M}}_1)$ be the corresponding real hypersurface.  By the previous example, it is a tube of radius $s=\pi/4$ over a totally complex submanifold. 

The normal vector field $N=\pi_{*}(\tilde{N})$ is lightlike, and therefore $N\in T\mathbf{M}_1$. Thus, the induced metric $g$ is degenerate. If we put $\mathbb{D}=\pi_{*}(\tilde{\mathbb{D}})$, then it is complex, and $N,\xi\in\mathbb{D}$. Since the codimension of $\mathbb{D}$ in $T\mathbf{M}_1$ is one, we can choose many locally defined $V\in T\mathbf{M}_1$ such that $T\mathbf{M}_1=\mathbb{D}\oplus \mathrm{Span}\{V\}$, but they cannot be orthogonal, since $g$ is degenerate.   

We define the shape operator as $AX=-\bar{\nabla}_XN$, for any $X\in TM$. Of course, for any $X\in T\mathbf{M}_1$, $A_{\tilde{N}}\tilde{X} = (AX)^{\sim}+g(A_{\tilde{N}}\tilde{X},J\chi)J\chi$. Bearing in mind that $AN=2N$ and $A\xi=0$, similarly to  Example \ref{typeB}, we can compute the principal curvatures of $A$ restricted  to $\mathbb{D}$, but for $\alpha=1$, obtaining $\lambda_1=0$ and $\lambda_2=2$, with both multiplicites $n-1$.  Then, $\mathbf{M}_1$ is Hopf.

Finally, we show that $A$ is not diagonalizable. Suppose that it is so. The remaining case is that there exists $V\not\in\mathbb{D}$ such that $AV=\beta V$ for some locally defined function $\beta$. In this case, $g(V,\xi)\neq 0$. But now, $\beta g(V,\xi)=g(AV,\xi)=g(V,A\xi)=0$. Then, $\beta=0$. We lift up, choosing the point $z\in\mathbf{\tilde{M}}_1$ such that $Q(z)=1$, and putting $Y=\tilde{V}$, we see 
$A_{\tilde{N}}Y=-2g_{\mathbb{C}}(z,\overline{Y})\overline{z} -\overline{Y}+Y=(AV)^{\sim}+g(A_{\tilde{N}}Y,iz)iz=g(V,\xi)iz$. By conjugating and adding, we obtain 
$-2g_{\mathbb{C}}(z,\overline{Y})\overline{z}-2g_{\mathbb{C}}g(\overline{z},Y)z=g(\xi,V)\big(iz-i\overline{z}\big).$ 
Since $g(V,\xi)\neq 0$,  $\{z,\overline{z},iz,i\overline{z}\}$ are linearly dependent. This is a contradiction. 

Remark that this example does not contradict Lemma \ref{AphiX}, since $\xi$ is lightlike. 
\end{example}

\begin{example}\label{horosphere} \textbf{Type C, the Horosphere.} 
Given $t>0$, we define the hypersurface
\[ \mth =\big \{ z=(z_1,\ldots,z_n)\in \snp :
(z_1-z_{n+1})(\overline{z}_1-\overline{z}_{n+1})=t \big\}.
\]
Clearly, $\mth$ is invariant by
the $\mathbb{S}^1$ action, so we can put $\mh=\pi(\mth)$. Since $\snp$ is orientable and
$\mth$ is a closed subset, it is also orientable, and so it is $\mh$. For each point $z\in \mth$, its tangent space is
\[
T_z\mth = \big\{ X=(X_1,\ldots,X_{n+1})\in \CC^{n+1}_p\ :  \
g(X,\chi_z)=0, \ g(X,\zeta_z)=0\big\},
\]
where $\zeta_z=(z_1-z_{n+1},0,\ldots,0,z_1-z_{n+1})$. Moreover, $T_{\pi(z)}\mh =
\pi_{*}\mathcal{H}T_z\mth$. However, $\zeta$ is lightlike. A simple computation shows that
\[ \tilde{N}= \frac{-1}{t}\zeta-\chi,
\]
is a unit, time-like, horizontal,  normal vector field along $\mth$. Thus, $N=\pi_{*}\tilde{N}$ is a unit, time-like, normal vector field along $\mh$, so that the index of $\mth$ and $\mh$ are $2p-1$.

Note that for any $X\in T\mth$, $X=(X_1,\ldots,X_{n+1})$, then $A_{\tilde{N}}X = -D_{X}\tilde{N} = -\bD_{X}\tilde{N}=\bD_{X}\big(\frac{1}{t}\zeta+\chi\big) = \frac{1}{t}\bD_{X}\zeta+X=\frac{1}{t}(X_1-X_{n+1},0,\ldots,0,X_1-X_{n+1})+X$,
\[ A_{\tilde{N}}X = \frac{1}{t}(X_1-X_{n+1},0,\ldots,0,X_1-X_{n+1})+X.
\]
If we take $X\in T_z\mth$, such that $X\perp \tilde{\xi}$ and $X\perp J\chi$,  then $0=g(X,\tilde{\xi}) = -g(J\tilde{N},X) =
g(\frac{1}{t}J\zeta+J\chi,X)=(1/t)g(J\zeta,X)$. By using the expression of $\zeta_z$ for some $z\in \snp$, then $0=\mathrm{Re}( i (z_1-z_{n+1})(\overline{X}_1-\overline{X}_{n+1}))$. In addition, since $X\perp \tilde{N}$, similarly we obtain $0=\mathrm{Re}((z_1-z_{n+1})(\overline{X}_1-\overline{X}_{n+1}))$. By the fact that $z_1\neq z_{n+1}$ due to the definition of $\mth$, then we have $X_1=X_{n+1}$. This is satisfied for any horizontal lift $\tilde{X}$ of any $X\in T\mh$ such that $X\perp \xi$. In this way,  we have, $A_{\tilde{N}}\tilde{X} = \tilde{X}$. But now, by \eqref{shapeopchi}, given $X\in T\mh$, $X\perp \xi$, \[AX=\pi_{*}(A_{\tilde{N}}\tilde{X}) = X.\]
In particular, $\mh$ is a Hopf real hypersurface. Since $A\xi=\mu\xi$ and the fact that $\xi$ is timelike, we have 
$-\mu=g(A\xi,\xi)=g(A_{\tilde{N}}\tilde{\xi},\tilde{\xi}) 
= -g(\bD_{\tilde{\xi}}\tilde{N},\tilde{\xi})
=g\big( \bD_{\tilde{\xi}}(\frac{1}{t}\zeta+\chi),\tilde{\xi}\big) 
= g(\bD_{\tilde{\xi}}(\zeta/t)+\tilde{\xi},\tilde{\xi}) = -1+g(\bD_{\tilde{\xi}}\zeta,\tilde{\xi})/t.$ But $\tilde{\xi} = -J\tilde{N}=J(\frac{1}{t}\zeta+\chi)$, so that $\bD_{\tilde{\xi}}\zeta = (\xi_1-\xi_{n+1},0,\ldots,0,\xi_1-\xi_{n+1})$. By evaluating at $z\in\mth$, $\tilde{\xi}_z=-J\tilde{N}_z = (i/t)(z_1-z_{n+1},0,\ldots,z_1-z_{n+1})+i(z_1,\ldots,z_{n+1})$, so that $\xi_1=i(1+1/t)z_1-(i/t)z_{n+1})$ and $\xi_{n+1}=(i/t)z_1+i(1-1/t)z_{n+1}$, obtaining $\xi_1-\xi_{n+1}=i(z_1-z_{n+1})$. Therefore, $\mu = 1-\frac{1}{t}\mathrm{Re}(-(\xi_1-\xi_{n+1})\bar{\xi}_1+(\xi_1-\xi_{n+1})\bar{\xi}_{n+1} ) = 1+\frac{1}{t} \vert z_1-z_{n+1}\vert^2=2.$ This means $A\xi = 2\xi$.
\end{example}

\section{Results}\label{ridigity}

We consider an immersion $f:M^{2n}\rightarrow\snp$. By shrinking it if necessary, we can assume that there is a globally defined unit vector field $N$ with constant causal character $\ep=g(N,N)=\pm 1$. If we consider two such immersions $f_1,f_2:M^{2n}\rightarrow \snp$, it makes sense to study if the associated Weingarten operators are related in some way, since both satisfy $A_1(p),A_2(p): T_pM^{2n}\rightarrow T_pM^{2n}$. 
\begin{theorem} \label{rigid} Let $f_i:M^{2n-1}_q\rightarrow \cpn$, $i=1,2$ two isometric immersions of the same connected manifold in $\cpn$, with Weingarten endomorphisms $A_1$ and $A_2$. If for each point $p\in M$, $A_1(p)=A_2(p)$, there exists an isometry $\Phi:\cpn\rightarrow\cpn$ such that $f_2=\Phi\circ f_1$. 
\end{theorem}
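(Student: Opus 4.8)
The plan is to lift both immersions to the Hopf circle-bundles over them, where $\tilde M$ is an honest hypersurface of the pseudo-sphere $\snp$ and a space-form Bonnet theorem applies, produce an ambient isometry there, and then descend it to $\cpn$. Two preliminary remarks set the stage. The common index $q$ of $M^{2n-1}_q$ fixes the causal character of both unit normals, so $\ep_1=\ep_2=:\ep$; and since $f_1,f_2$ induce the same metric $g$ they share the Levi-Civita connection $\nabla$, the curvature $R$, and hence the Ricci endomorphism $S$.

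First I would show that the two almost contact structures agree up to a global sign. Feeding $A_1=A_2=:A$ into $SX=(2n+1)X-3\ep\,\eta(X)\xi+\ep\,\mathrm{tr}(A)\,AX-\ep A^2X$ and subtracting the two copies, everything cancels but the structure term, leaving $\eta_1(X)\xi_1=\eta_2(X)\xi_2$ for all $X$; taking $X=\xi_1$ gives $\ep\xi_1=g(\xi_1,\xi_2)\xi_2$, so $\xi_1=c\,\xi_2$ where $c=g(\xi_1,\xi_2)/\ep=\pm1$ is constant on the connected $M$. Putting $Y=\xi_i$ in the Codazzi equation, with $\phi\xi=0$ and $\eta(\xi)=\ep$, yields $\ep\,\phi_iX=(\nabla_{\xi_i}A)X-(\nabla_XA)\xi_i$, so $\phi_i$ is reconstructed from $(g,A,\xi_i)$; hence $\phi_2=c\,\phi_1$ and $\eta_2=c\,\eta_1$. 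For $c=1$ the structures coincide, $\xi_1=\xi_2=:\xi$ and $\phi_1=\phi_2=:\phi$; the sign $c=-1$ occurs exactly when $f_2$ differs from $f_1$ by the ambient complex conjugation, and the same argument below then returns an anti-holomorphic isometry, so I assume $c=1$.

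Next I pass to the lifts of the commutative diagram, $\tilde f_i:\tilde M^{2n}\to\snp\subset\CC^{n+1}_p$, identifying the two principal $\mathbb S^1$-bundles over $(M,g)$; their total spaces carry the same metric (horizontal part $\pi^{*}g$, unit spacelike fibre generated by the vertical field, which equals $J\chi$ under each $\tilde f_i$). By \eqref{shapeopchi} the shape operator of the hypersurface $\tilde M$ in $\snp$ with respect to $\tilde N_i$ is $A_{\tilde N_i}\tilde X=(AX)^{\sim}+\eta(X)J\chi$, $A_{\tilde N_i}(J\chi)=\tilde\xi$, depending only on the now-common $(A,\xi,\phi)$; hence $A_{\tilde N_1}=A_{\tilde N_2}$. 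Thus $\tilde f_1,\tilde f_2$ are immersions of $\tilde M$ into the space form $\snp$ with the same first and second fundamental forms, and the pseudo-Riemannian fundamental (Bonnet) theorem for hypersurfaces yields a linear isometry $\tilde\Phi\in O(2p,2n+2-2p)$ of $\CC^{n+1}_p$ preserving $\snp$, with $\tilde f_2=\tilde\Phi\circ\tilde f_1$, $d\tilde\Phi(\tilde N_1)=\tilde N_2$ and $\tilde\Phi(\chi_1)=\chi_2$ (with $\chi$ the position vector). Applying this on simply connected pieces and gluing over the connected $M$, where a linear isometry is pinned down by its $1$-jet at one point, gives a single global $\tilde\Phi$.

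The crux, and the main obstacle, is to upgrade $\tilde\Phi$ from a metric isometry to a complex-linear one, for only then does it descend to $\cpn$. I would check $\tilde\Phi J=J\tilde\Phi$ on the frame $\{\tilde{\mathbb{D}},\,\tilde\xi_1,\,J\chi_1,\,\chi_1,\,\tilde N_1\}$ spanning $\CC^{n+1}_p$ at each point, exploiting that $\tilde\Phi$ intertwines the shape operators, which by \eqref{shapeopchi} carry the K\"ahler data through $\tilde\xi=-J\tilde N$ and the vertical vector $J\chi$. Because the vertical field is the image under each $\tilde f_i$ of the common fundamental field of the $\mathbb S^1$-action, the intertwining $d\tilde\Phi\circ d\tilde f_1=d\tilde f_2$ forces $d\tilde\Phi(J\chi_1)=J\chi_2=J\tilde\Phi(\chi_1)$; then $A_{\tilde N_1}(J\chi_1)=\tilde\xi_1$ gives $d\tilde\Phi(\tilde\xi_1)=A_{\tilde N_2}(J\chi_2)=\tilde\xi_2$, whence $\tilde\Phi(J\tilde N_1)=J\tilde N_2$ and $\tilde\Phi(J(J\chi_1))=J(J\chi_2)$; and on the horizontal distribution $J\tilde X=\widetilde{\phi X}$ together with $\phi_1=\phi_2$ gives commutativity on $\tilde{\mathbb{D}}$. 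Since a linear map that commutes with $J$ on a basis commutes with it everywhere, $\tilde\Phi$ is pseudo-unitary; it therefore commutes with scalar multiplication by $\mathbb S^1$, preserves the fibres, and descends to a holomorphic isometry $\Phi$ of $\cpn$ with $f_2=\Phi\circ f_1$. The genuinely delicate points are the careful bookkeeping in this final verification and the precise form of the fundamental theorem in the indefinite, higher normal-index setting; everything else is the algebra of \eqref{shapeopchi} and the structure equations.
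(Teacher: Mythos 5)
Your proof is correct, and its skeleton is the same as the paper's: lift both immersions through the Hopf fibration to hypersurfaces of $\snp$, invoke the fundamental (Bonnet) theorem in that space of constant curvature to produce an ambient linear isometry, and project it back to $\cpn$. The genuine difference is that you interpolate two substantive steps that the printed proof passes over, and both address real gaps. First, the paper claims that $A_1=A_2$ makes the lifted Weingarten operators coincide ``by \eqref{shapeopchi}'', but that formula also involves $\xi$ and $g(\xi,\cdot)$, which a priori depend on the immersion; your derivation of $\eta_1(X)\xi_1=\eta_2(X)\xi_2$ from the intrinsic Ricci endomorphism, followed by the Codazzi reconstruction of $\phi_i$, is exactly what legitimises that claim, and it also surfaces the sign $c=\pm 1$ (your $c=-1$ branch, resolved by composing with the anti-holomorphic isometry induced by conjugation, is harmless because the theorem only asks for an isometry of $\cpn$). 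Second, the paper's assertion that the ambient isometry ``can be chosen to be the restriction of an isometry of $\CC^{n+1}_p$, so that it is a holomorphic map'' is too quick: a real-linear element of $O(2p,\,2n+2-2p)$ need not commute with $J$, and without $J$-commutation (equivalently, $\mathbb{S}^1$-equivariance) the map does not descend to $\cpn$ at all. Your frame-by-frame verification---$\tilde\Phi(J\chi_1)=J\chi_2$ from equivariance of the vertical field, $\tilde\Phi(\tilde\xi_1)=\tilde\xi_2$ from the intertwining of shape operators, and commutation on $\tilde{\mathbb{D}}$ from $\phi_1=\phi_2$---supplies the missing justification, since a linear map commuting with $J$ on a spanning set is complex-linear. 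In short, your route buys completeness at precisely the two places where the published proof is terse, at the cost of length; the remaining soft points in your write-up (identification of the two circle bundles on simply connected pieces, gluing via rigidity of isometries determined by their $1$-jets) are handled adequately.
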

\begin{proof} We consider the corresponding lifts $\tilde{f}_i:\tilde{M}\rightarrow \snp$, $i=1,2$, via the Hopf fibration $\pi:\snp\rightarrow \cpn$. By \eqref{shapeopchi}, the Weingarten's endomorphisms of $\tilde{f}_i$ coincide everywhere. Therefore, the equations of Gauss and Codazzi are the same for both $\tilde{f}_1$ and $\tilde{f}_2$. As $\snp$ is a space of constant curvature, by a similar way as in Riemannian Space Forms,  there exist an isometry $\hat{\Phi}$ of $\snp$ such that $\hat{\Phi}\circ \tilde{f}_1=\tilde{f}_2$.  $\hat{\Phi}$ can be chosen to be the restriction of an isometry of $\mathbb{C}^{n+1}_p$, so that it is a holomorphic map. Therefore, it holds $\hat{\Phi}_*(\chi) = \chi$ and $\hat{\Phi}_*(J\chi)=J\chi$. This means that we can project $\hat{\Phi}$ to $\cpn$, and obtain our result. 
\end{proof}
\begin{definition} Let $M$ be a real hypersurface in $\cpn$, $n\geq 2$. We say that $M$ is $\eta$-umbilical if its Weingarten endomorphism is of the form $AX=\lambda X+\rho \eta(X)\xi$ for any $X\in TM$, for some functions $\lambda,\rho\in C^{\infty}(M)$. 
\end{definition}

\begin{theorem} \label{casiumibilical} Let $M$ be a connected, non-degenerate, oriented real hypersurface in $\cpn$, $n\geq 2$, such that it is $\eta$-umbilical.  Then, $M$ is locally congruent to one of the following real hypersurfaces:
\begin{enumerate}
\item A real hypersurface of type $A_{+}$, with $m=q+2$, $q\leq p \leq m=q+2$, 
$\mu=2\cot(2r)$ and $\lambda=\cot(r)$,  $r\in(0,\pi/2)$; 
\item A real hypersurface of type $A_{+}$, with $m=n+q+1$, $0\leq q\leq 1$, $\mu=2\cot(2r)$ and $\lambda=-\tan(r)$,  $r\in(0,\pi/2)$; 
\item A real hypersurface of type $A_{-}$, with $m=q+2$, $q\leq p \leq m=q+2$, $\mu=2\coth(2r)$, $r>0$ and $\lambda=\coth(r)$;
\item A real hypersurface of type $A_{-}$, with $m=q+2$, $q\leq p \leq m=q+2$, $\mu=2\coth(2r)$, $r>0$ and $\lambda=\tanh(r)$;
\item A horosphere. 
\end{enumerate}
\end{theorem}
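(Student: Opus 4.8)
The plan is to show that $\eta$-umbilicality forces $M$ to be Hopf with exactly two constant principal curvatures bound by a single quadratic relation, to read off from that relation which of the parametrised families of Section~\ref{examples} can occur, and finally to promote the agreement of the curvature data to a local congruence by the rigidity Theorem~\ref{rigid}.

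First I would unwind the definition. Evaluating $AX=\lambda X+\rho\,\eta(X)\xi$ at $X=\xi$ gives $A\xi=(\lambda+\rho\ep)\xi$, so $M$ is Hopf with $\mu=\lambda+\rho\ep$; evaluating it at $X\in\D=\mathrm{Span}\{\xi\}^{\perp}$ gives $AX=\lambda X$, so $\lambda$ is a principal curvature on all of $\D$, of constant multiplicity $2n-2$. By Theorem~\ref{mu=constant} the function $\mu$ is locally constant, hence constant on the connected $M$. Since $n\geq 2$, for $0\neq X\in\D$ the vector $\phi X$ again lies in $\D$ and is nonzero, so $A\phi X=\lambda\phi X$; inserting this into Lemma~\ref{AphiX} gives
\[
(2\lambda-\mu)\lambda\,\phi X=(\lambda\mu+2\ep)\,\phi X,
\]
and, as $\phi X\neq 0$, the key relation
\[
\lambda^{2}-\mu\lambda-\ep=0
\]
(which in the borderline case $2\lambda=\mu$ is exactly Corollary~\ref{mu=2}). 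Because $\mu$ and $\ep$ are constant, $\lambda$ is a continuous map into the finite root-set of a fixed quadratic, so on the connected $M$ it is constant. In particular $\lambda=\mu$ is impossible, since it would force $-\ep=0$, and thus $M$ carries precisely two distinct constant principal curvatures.

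Next I would enumerate. Since the index of $M$ equals $2p$ when $\ep=+1$ and $2p-1$ when $\ep=-1$, the sign $\ep$ is already fixed by $M$. For $\ep=+1$ the relation is $\lambda^{2}-\mu\lambda-1=0$; writing $\mu=2\cot(2r)$, $r\in(0,\pi/2)$, its two roots are $\lambda=\cot r$ and $\lambda=-\tan r$, giving cases (1) and (2) of type $A_{+}$ (the type $B_{+}$ model shares this $\mu$ but carries a second $\D$-eigenvalue that fails the quadratic, so the single-eigenvalue condition excludes it, while the alternative table entry $\mu=2\tan(2r)$ is only a reparametrisation). For $\ep=-1$ the relation is $\lambda^{2}-\mu\lambda+1=0$, whose discriminant $\mu^{2}-4$ must be nonnegative, forcing $|\mu|\geq2$; this rules out the ranges of $B_{0}$ ($\mu=\sqrt{3}$) and $B_{-}$ ($|\mu|<2$). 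When $|\mu|>2$ I write $\mu=2\coth(2r)$ and obtain $\lambda=\coth r$ or $\lambda=\tanh r$, cases (3) and (4) of type $A_{-}$; when $|\mu|=2$ the root is double, $\lambda=\mu/2=\pm1$, the horosphere of Example~\ref{horosphere}, case (5). In each subcase one fixes the example parameters $q,m$ (with $q\le p\le m$, $m>q+1$) so that one of $\dim V_{\lambda_1},\dim V_{\lambda_2}$ vanishes, thereby matching both the single eigenvalue on $\D$ and the index of $M$.

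It remains to upgrade this numerical match to a congruence, and this is where I expect the real work to lie. Passing to the lift $\tilde{M}\subset\snp$, equation~\eqref{shapeopchi} expresses the shape operator $A_{\tilde{N}}$ purely through $A$, $\xi$ and $J\chi$, hence through the constants $\mu,\lambda,\ep$ alone; the model $M'$ selected above, sharing these constants, produces the same $A_{\tilde{N}}$ in an adapted $\phi$-frame. Once the almost contact data $(\phi,\xi,\eta,A)$ of $M$ and $M'$ coincide in such frames, their Gauss and Codazzi equations agree, and the uniqueness part of the fundamental theorem of submanifolds in the constant-curvature space $\snp$ yields a local isometry $\Psi$ with $\Psi^{*}A'=A$; feeding $\Psi$ into Theorem~\ref{rigid} then produces the holomorphic isometry of $\cpn$ carrying a neighbourhood of $M$ onto one of $M'$. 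The main obstacle is precisely this last step in the indefinite setting: one must build adapted orthonormal frames with the correct causal signs so that the shape operators literally coincide, not merely up to eigenvalues, and verify that the example parameters can be chosen so that the index of $M'$ equals that of $M$. The double-root case $|\mu|=2$, where $2\lambda=\mu$ and Lemma~\ref{AphiX} degenerates to $0=0$, must be handled separately through Corollary~\ref{mu=2} and compared against the non-degenerate horosphere rather than against a type-$A$ tube.
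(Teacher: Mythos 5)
Your proposal is correct and follows essentially the same route as the paper: Theorem \ref{mu=constant} plus Lemma \ref{AphiX} yield the quadratic $\lambda^{2}-\mu\lambda-\ep=0$ (the paper's relation for $\lambda$), the case analysis on $\ep$ and on the size of $\mu$ is identical (including the contradiction when $\ep=-1$, $|\mu|<2$), and the congruence is obtained by matching the data of the examples and invoking Theorem \ref{rigid}. The only differences are cosmetic: you derive the quadratic uniformly so that $\mu=2$, $\ep=-1$ forces $\lambda=1$ directly, whereas the paper splits off the case $\mu=2\lambda$ via Corollary \ref{mu=2}; your closing remark that Lemma \ref{AphiX} ``degenerates to $0=0$'' when $2\lambda=\mu$ is slightly inaccurate---since $\phi X\neq 0$ it still forces $\lambda\mu+2\ep=0$, which is exactly Corollary \ref{mu=2}---but this does not affect your argument.
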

\begin{proof} We notice that the Weingarten endomorphism is diagonalisable, with only two principal curvatures, namely $\lambda$ and $\mu=\lambda+\rho$, with $A\xi=\mu\xi$. By Theorem \ref{mu=constant}, $\mu$ is locally constant. By changing $N$ by $-N$ if necessary, we can suppose $\mu\geq 0$. 

\noindent \textit{Case $\mu\neq 2\lambda$.} By Lemma \ref{AphiX}, it holds $\lambda = \frac{\lambda\mu +2\varepsilon}{2\lambda -\mu}.$ This implies 
\begin{equation} \label{lamu}
\lambda = \frac{\mu \pm \sqrt{\mu^2+4\varepsilon} }{2}.
\end{equation}
This  shows that $\lambda$ is also locally constant. 

If $\varepsilon =+1$, there exists some $r\in(0,\pi/2)$ such that $\mu=2\cot(2r)$. By \eqref{lamu}, either $\lambda=\cot(r)$ or $\lambda=-\tan(r)$, and whose eigenspace satisfies $\dim V_{\lambda}=2n-2$. But these two cases appear in Example \ref{typeA}, as pointed out at the end of it. By Theorem \ref{rigid}, $M$ is locally congruent to one of these examples. 

If $\ep=-1$, there are three possibilities, namely $\mu\in[0,2)$, or $\mu=2$, or $\mu>2$. If $\mu=2\coth(2r)>2$ for some $r>0$, then by \eqref{lamu}, either  $\lambda=\tanh(r)$ or $\lambda=\coth(r)$. We finish these two cases in a similar way as in $\varepsilon=+1$. Next, if $0\leq \mu=2\tanh(2r)<2$ for some $r\geq 0$, then by \eqref{lamu}, we obtain $\lambda=\tanh(2r)\pm \sqrt{(\tanh(2r))^2-1}$. As $(\tanh(2r))^2<1$, we get to a contradiction. Finally, if $\mu=2$, since $\mu\neq 2\lambda$, then $\lambda\neq 1$. By Lemma \ref{AphiX}, there exists a third  principal curvature $\hat{\lambda}=(\lambda\mu -2)/(2\lambda-\mu))=1$. This is a contradiction. 

\noindent \textit{Case $\mu=2\lambda$.} By Corollary \ref{mu=2}, then $\varepsilon=-1$, $\mu=2$ and $\lambda=1$. Then, we have $AX=\eta(X)\xi+X$ for any $X\in TM$. By Example \ref{horosphere} and Theorem \ref{rigid}, $M$ is locally congruent to a horosphere. 
\end{proof}
\begin{cor} \label{Killing} Let $M$ be a non-degenerate real hypersurface in $\cpn$ such that its Weingarten endomorphism is diagonalisable. The following are equivalent:
\begin{enumerate}
\item $\xi$ is a Killing vector field;
\item $A\phi=\phi A$;
\item $M$ is an open subset of one of the following:
\begin{enumerate}
\item A real hypersurface of type $A_{+}$, with $m=q+2$, $q\leq p \leq m=q+2$, 
$\mu=2\cot(2r)$ and $\lambda=\cot(r)$,  $r\in(0,\pi/2)$; 
\item A real hypersurface of type $A_{+}$, with $m=n+q+1$, $0\leq q\leq 1$, $\mu=2\cot(2r)$ and $\lambda=-\tan(r)$,  $r\in(0,\pi/2)$; 
\item A real hypersurface of type $A_{-}$, with $m=q+2$, $q\leq p \leq m=q+2$, $\mu=2\coth(2r)$, $r>0$ and $\lambda=\coth(r)$;
\item A real hypersurface of type $A_{-}$, with $m=q+2$, $q\leq p \leq m=q+2$, $\mu=2\coth(2r)$, $r>0$ and $\lambda=\tanh(r)$;
\item A horosphere. 
\end{enumerate}
\end{enumerate}
\end{cor}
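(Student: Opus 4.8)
The strategy is to settle $(1)\Leftrightarrow(2)$ by a purely pointwise computation, and then to establish $(2)\Leftrightarrow(3)$ using the Hopf machinery of Section \ref{preliminaries} together with the rigidity Theorem \ref{rigid}. For $(1)\Leftrightarrow(2)$, recall from the Preliminaries that $\nabla_X\xi=\phi AX$; hence $\xi$ is Killing exactly when $X\mapsto\phi AX$ is skew-adjoint, i.e. $g(\phi AX,Y)=-g(X,\phi AY)$ for all $X,Y$. Since $\phi$ is skew-adjoint and $A$ is self-adjoint, $g(\phi AX,Y)=-g(AX,\phi Y)=-g(X,A\phi Y)$, so skew-adjointness reads $g(X,A\phi Y)=g(X,\phi AY)$ for all $X,Y$. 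As $g$ is non-degenerate, this is precisely $A\phi=\phi A$. Note that this argument uses neither the curvature of $\cpn$ nor diagonalisability.

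\emph{Direction $(3)\Rightarrow(2)$.} I would verify $\phi$-invariance of the principal distributions for each model. For the type $A$ hypersurfaces of Example \ref{typeA}, the eigenspaces are $\pi_*(\tilde{\mathbb{D}}\cap\ker\qq)$ and $\pi_*(\tilde{\mathbb{D}}\cap\ker\q)$, intersections of $J$-invariant subspaces, hence preserved by $\phi$; for the horosphere of Example \ref{horosphere}, $A$ acts as the identity on $\xi^{\perp}$ and as $2$ on $\xi$, so it commutes with $\phi$ trivially. In every listed case $\phi$ preserves each eigenspace, whence $A\phi=\phi A$.

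\emph{Direction $(2)\Rightarrow(3)$.} Evaluating $A\phi=\phi A$ on $\xi$ and using $\phi\xi=0$ gives $\phi A\xi=0$, so $A\xi\in\ker\phi=\mathrm{Span}\{\xi\}$ and $M$ is Hopf; by Theorem \ref{mu=constant}, $\mu$ is constant. For a principal $X\perp\xi$ with $AX=\lambda X$, commutation gives $A\phi X=\phi AX=\lambda\phi X$, so the partner curvature of Lemma \ref{AphiX} satisfies $\hat\lambda=\lambda$. Imposing $\hat\lambda=\lambda$ in the table then selects exactly the eigenvalue data of the type $A$ models: for $\varepsilon=+1$, $\mu=2\cot(2r)$ with $\lambda\in\{\cot r,-\tan r\}$; for $\varepsilon=-1$, $\mu=2\coth(2r)$ with $\lambda\in\{\coth r,\tanh r\}$. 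The branches in which $\phi$ would interchange two \emph{distinct} eigenvalues (the $\mu=2\tanh(2r)$ row, which forces the impossible $\coth=\tanh$, and the type $B$ configurations) are incompatible with $\hat\lambda=\lambda$ and are discarded. The remaining possibility $\mu=2\lambda$ is governed by Corollary \ref{mu=2}, giving $\varepsilon=-1$, $\mu=2$, $\lambda=1$, the horospherical datum. Since in each surviving case the principal curvatures match those computed for the examples in Section \ref{examples}, Theorem \ref{rigid} identifies $M$, up to a holomorphic isometry, with an open subset of the corresponding model, yielding $(3)$.

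The hard part is the $(2)\Rightarrow(3)$ direction. One must check that imposing $\hat\lambda=\lambda$ across every branch of the table of Lemma \ref{AphiX} leaves precisely the type $A$ and horospherical eigenvalue configurations, and then appeal to the explicit shape-operator computations of Examples \ref{typeA} and \ref{horosphere} so that Theorem \ref{rigid} can upgrade the coincidence of shape operators into a genuine congruence; the delicate bookkeeping is isolating which $(\mu,\lambda)$ pairs survive and confirming they are all realised by the listed examples.
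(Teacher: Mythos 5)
Your $(1)\Leftrightarrow(2)$ is exactly the paper's argument in different clothing: the identity $\mathcal{L}_{\xi}g(X,Y)=g((\phi A-A\phi)X,Y)$ that the paper computes is your skew-adjointness statement, and neither version uses curvature or diagonalisability. Your explicit check of $(3)\Rightarrow(2)$ is also correct; the paper leaves that direction implicit, since every listed model is $\eta$-umbilical.

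The genuine gap is in $(2)\Rightarrow(3)$, precisely where you replace the paper's reduction by an inline spectral analysis. The paper does not pass from eigenvalue data to the list: it first establishes that $A$ has the $\eta$-umbilical form $AX=\lambda X+(\mu-\lambda)\eta(X)\xi$ for a \emph{single} function $\lambda$ (for $|\mu|\neq 2$ it deduces $\hat{\lambda}=\lambda$ and passes to this form; for $\mu=2$ it derives a contradiction from any second curvature $\rho\neq 1$), and only then quotes Theorem \ref{casiumibilical}, whose conclusion is word-for-word the list in (3). Your analysis yields only that each principal curvature $\lambda$ on $\xi^{\perp}$ satisfies $\hat{\lambda}=\lambda$, equivalently $\lambda^{2}-\mu\lambda-\varepsilon=0$. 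This quadratic has two roots ($\cot r$ and $-\tan r$ when $\varepsilon=+1$; $\coth r$ and $\tanh r$ when $\varepsilon=-1$, $\mu>2$), and nothing in your argument excludes the configuration in which \emph{both} roots occur simultaneously on complementary eigenspaces. That configuration is not ``incompatible with $\hat{\lambda}=\lambda$'': each root is its own $\phi$-partner (substituting $\varepsilon=\lambda^{2}-\mu\lambda$ into Lemma \ref{AphiX} gives $A\phi X=\lambda\phi X$), so all eigenspaces are $\phi$-invariant and $A\phi=\phi A$ holds there. Indeed, your own proof of $(3)\Rightarrow(2)$ demonstrates this: the eigenspaces $\pi_{*}(\tilde{\mathbb{D}}\cap\ker\q)$ and $\pi_{*}(\tilde{\mathbb{D}}\cap\ker\qq)$ of \emph{every} type $A$ hypersurface of Example \ref{typeA} are complex, hence $\phi$-invariant, whether or not $m=q+2$ or $m=n+q+1$. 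A pointwise commutation argument therefore cannot ``select exactly'' the models on the list, all of which carry a single curvature of multiplicity $2n-2$ on $\xi^{\perp}$ (that is what the conditions $m=q+2$, $m=n+q+1$ encode); in the mixed configuration the spectrum matches none of (a)--(e). The claim that only one root occurs --- $\eta$-umbilicity --- is the load-bearing step of the paper's proof, and it is absent from yours; since the mixed configuration is actually realised by type $A$ hypersurfaces with two nontrivial eigenspaces, it cannot simply be discarded by any local spectral argument, which shows your route as written cannot close.

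A secondary point: Theorem \ref{rigid} is not applicable to ``matching principal curvatures''. Its hypothesis is two isometric immersions of the \emph{same} manifold whose Weingarten endomorphisms agree at every point; to use it one must first construct a local isometry of $M$ onto the model that intertwines the shape operators. The paper funnels this congruence step through Theorem \ref{casiumibilical} rather than citing rigidity directly inside the corollary, and any repaired version of your argument has to do the same or build that intertwining isometry by hand.
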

\begin{proof} If $\mathcal{L}_{\xi}$ is the Lie derivative w.r.t. $\xi$ on $M$, given $X,Y\in TM$, we have
\[ \mathcal{L}_{\xi}g(X,Y)=g(\nabla_X\xi,Y)+g(X,\nabla_Y\xi)=g((\phi A-A\phi)X,Y).
\]
This shows the equivalence of items 1 and 2. Next, we assume that $A\phi=\phi A$. Then, $\phi A\xi=A\phi\xi=0$, Therefore, $A\xi\in \ker (\phi)=\mathrm{Span}\{\xi\}$, which means that  $\xi$ is a principal vector with principal curvature $\mu$. First, if $\vert\mu\vert \neq 2$, since $A$ is diagonalisable, given $X\perp \xi$ such that $AX=\lambda X$, by Lemma \ref{AphiX}, $\phi X$ is also a vector field with associated principal curvature $\hat{\lambda}$. Then, $\hat{\lambda}\phi X=A\phi X=\phi AX=\lambda \phi X$. Therefore, $\lambda=\hat{\lambda}$. Now, the Weingarten endomorphism becomes $AX=\lambda X+(\mu-\lambda)\eta(X) \xi$, for any $X\in TM$. Second, essentially, it remains $\mu=2$. By Lemma \ref{AphiX}, $\lambda=1$ is also a principal curvature. Assume that a point $p\in M$, there exists another principal curvature $\rho(p)\neq 1$, with associated vector $Z\perp \xi_z$. By Lemma \ref{AphiX}, $\rho \phi Z=\phi AZ = A\phi Z = \phi Z$. This is a contradiction. We finish the proof by  Theorem \ref{casiumibilical}. 
\end{proof}

% ------------------------------------------------------------------------
\end{document}